\def\ep{{\varepsilon}}
\def\R{\mathbb R}
\def\N{\mathbb N}
\def\P{\mathcal P}
\def\A{\mathcal A}
\newcommand{\ds}{\displaystyle}
\newtheorem{theo}{\textbf{Theorem}}[section]
\newtheorem{lem}[theo]{\textbf{Lemma}}
\newtheorem{cor}[theo]{\textbf{Corollary}}
\newtheorem{defi}[theo]{\textbf{Definition}}
\newtheorem{assumption}[theo]{\textbf{Assumption}}
\newtheorem{rem}[theo]{\textbf{Remark}}
\title{\bf The Allen-Cahn equation with nonlinear truncated Laplacians: description of radial solutions}
\date{}
\begin{document}
\maketitle

\begin{center}
{\large\bf Matthieu Alfaro\footnote{Univ. Rouen Normandie, CNRS, LMRS UMR 6085, F-76000 Rouen, France.} and Philippe Jouan\footnote{Univ. Rouen Normandie, CNRS, LMRS UMR 6085, F-76000 Rouen, France.}}
\end{center}

%\vspace{10pt}

\tableofcontents

\vspace{10pt}

\begin{abstract} We consider the Allen-Cahn equation with the so-called truncated Laplacians, which are fully nonlinear differential operators that depend on {\it some}  eigenvalues of the Hessian matrix. By monitoring the sign of  a quantity that is responsible for switches from a first order ODE regime to a second order ODE regime (and {\it vice versa}), we give a nearly complete description of  radial solutions. In particular, we reveal the existence of surprising unbounded radial solutions. Also radial solutions cannot oscillate, which is in sharp contrast with the case of the Laplacian operator, or that of Pucci's operators.\\

\noindent{\underline{Key Words:} fully nonlinear  differential operator, Allen-Cahn nonlinearity, radial solutions, switches.}\\

\noindent{\underline{AMS Subject Classifications:} 35J70, 34C26, 34C25, 35B30.} 
%\noindent{\underline{AMS Subject Classifications:} 35J70 (Degenerate elliptic equations), 35B30 (Dependence of solutions to PDEs on initial and/or boundary data and/or on parameters of PDEs), 34C25 (Periodic solutions to ordinary differential equations), 34C26   	(Relaxation oscillations for ordinary differential equations).}
\end{abstract}

\section{Introduction}\label{s:intro}

This work is concerned with the highly degenerate partial differential  equations
\begin{equation}
\label{eq-moins}
\P _k ^{-}  U+g(U)=0 \quad \text{ in }  \R^{N},
\end{equation}
and
\begin{equation}
\label{eq-plus}
\P   _k^{+} U+g(U)=0 \quad \text{ in } \R^{N}.
\end{equation}
Here $N\geq 2$ and, for $1\leq k<N$, $\P _k^{-}$ et $\P _k ^{+}$ denote the so-called nonlinear truncated Laplacian operators. The nonlinearity $g$ is of the bistable type, a typical example being the Allen-Cahn nonlinearity $g(U)=U-U^{3}$. In this framework, we give a nearly complete description of radial solutions, revealing sharp differences with the case of the Laplacian operator, or that of Pucci's operators.

\medskip

The nonlinear truncated Laplacians are defined as follows. Let $N\geq 2$ be given. For $U:\R^N\to \R$, say of the class $\mathcal C^2$, we denote by
$$
\lambda_1(D^2U)\leq \cdots\leq \lambda _N(D^2U)
$$
the eigenvalues of the Hessian matrix $D^2U$. For $1\leq k<N$ we consider the fully nonlinear differential operators given by
\begin{equation}
\label{Pmoins}
\P _k^- U:=\sum_{i=1}^k \lambda_i(D^2U),
\end{equation}
and
\begin{equation}
\label{Pplus}
\P _k^+ U:=\sum_{i=N-k+1}^N \lambda_i(D^2U).
\end{equation}
%Note that the case $k=N$ leads to the linear situation $\P _N^{-}U=\P _N^{+}U=\Delta U$, hence we will always suppose that $k<N$.%

These highly degenerate operators have been introduced in the context of differential 
geometry, by Wu \cite{WuH} and Sha \cite{ShaJP}, in order to solve problems related to manifolds with partial 
positive curvature. They also appear in the analysis of mean curvature flow in arbitrary 
codimension performed by Ambrosio and Soner   \cite{Amb-Son-96}.

In a PDE context, they are considered as an example of degenerate fully non linear operators 
in the seminal {\it User's guide} \cite{Cra-Ish-Lio-92}, but more recently both 
Harvey and Lawson in \cite{HarvLawCPAM, HarvLawIUMJ} and Caffarelli, Li and Nirenberg \cite{CafLiNirIII} have 
studied them in a completely new light.

In the very last years, some new results have been obtained on Dirichlet problems in bounded domains in 
relationship with the convexity of the domain, through the study of the maximum principle and the  principal eigenvalue,  see \cite{OberSilv} and  \cite{Bir-Gal-Ish-18, Bir-Gal-Ish-20}. 

As far as problems in $\R ^N$ are concerned, we refer to the works of Birindelli, Galise and Leoni \cite{Bir-Gal-Leo-17}  and  Galise \cite{Gal-19} on the existence of steady states (nonlinear Liouville theorems),  and to the work \cite{Alf-Bir-20} on solutions to evolution equations (Heat equations and Fujita blow up phenomena).

Let us also mention the works  \cite{BlaRos}, \cite{BlaEsteRos} involving the
degenerate  operators defined by $\mathcal P _j U:=\lambda_j(D^2U)$ for some $1\leq j\leq N$, studying  well-posedness of such problems, and their approximation by a two-player zero-sum game.

\medskip

When considering the Laplacian operator, the search of radial solutions $U(x)=u(\vert x\vert)$ to
$$
\Delta U +g(U)=0\quad \text{ in } \R ^N,
$$
reduces to understanding, for any $\xi\in \R$, the non autonomous second order ODE Cauchy problem
$$
u''+\frac{N-1}{r}u'+g(u)=0\quad  \text{ in } (0,+\infty), \quad u(0)=\xi, \quad u'(0)=0,
$$
where $u=u(r)$. Building on this, one can prove, when $g$ is an odd and bistable nonlinearity,  the existence of {\it oscillating} solutions  which, moreover, are periodic when $N=1$ and {\it localized} when $N\geq 2$ (see below for precise definitions).  We refer to \cite{Man-Mon-Pel-17} for such constructions, see also \cite{Gui-Zho-08} and \cite{Gui-Luo-Zho-09} for related results. Such oscillating radial solutions  were also constructed for the bistable prescribed mean curvature equations \cite{Pom-18}, namely
$$
\textrm{div}\left(\frac{\nabla U}{\sqrt{1\pm \vert \nabla U\vert ^2}}\right)+g(U)=0 \quad \text{ in } \R^N,
$$
the sign \lq\lq$+$'' corresponding to the Euclidian case, the sign \lq\lq$-$'' to the Lorentz-Minkowski case.

Very recently, the case of Pucci's extremal operators was studied by d'Avenia and Pomponio \cite{Ave-Pom-20}. In contrast with the Laplacian operator or the mean curvature operator, the Pucci's operators of radial functions may take two different forms depending on the sign of $u''(r)$. Because of that, in order to construct radial solutions, one needs to monitor the quantity $u''(r)$ and may have to glue solutions of two different second order ODE Cauchy problems.  Nevertheless, as proved in \cite{Ave-Pom-20}, oscillating radial solutions still exist.

The nonlinear truncated Laplacian operators share with Pucci's operators the property of switching their expressions depending on the sign of a quantity, namely $u''(r)-\frac 1 r u'(r)$. Tracking such a quantity is far from trivial and, moreover, when a switch occurs, one has to glue a solution of a first order ODE with a solution of a second order ODE. This makes the analysis rather involved and outcomes are in sharp contrast with the aforementioned equations. In particular, it turns out that equations \eqref{eq-moins} and \eqref{eq-plus} support the existence of unexpected unbounded  solutions but, on the other hand, do not admit any oscillating (radial) solutions.

\section{Main results}\label{s:results}

We consider a bistable nonlinearity $g$, a typical example being  the Allen-Cahn nonlinearity $g(U)=U-U^{3}$. More generally, we always assume the following. 

\begin{assumption}[Bistable nonlinearity]\label{ass:nonlinearity} The nonlinearity $g:\R\to \R$ is odd, and of the class $\mathcal{C}^1$ on $\R$. There are $0<\beta<\alpha$ such that
	\begin{enumerate}
		\item [(i)] $g>0$ on $(0,\alpha)$, $g<0$ on $(\alpha,+\infty)$,		
		\item  [(ii)] $g'> 0$ on $[0,\beta)$, $g'< 0$ on $(\beta,+\infty)$,
		\item [(iii)] $g$ is twice differentiable at $\pm\beta$ and $g''(\beta)<0$.
	\end{enumerate}
\end{assumption}

In this work, by (radial) solution to \eqref{eq-moins}, or \eqref{eq-plus},  we always mean the following.

\begin{defi}[Radial solutions]\label{def:radial} A radial solution to \eqref{eq-moins}, or \eqref{eq-plus}, is a $0<R\leq +\infty$ and a piecewise $\mathcal C^2$ function $u:[0,R)\to \R$,  with $u'(0)=0$, such that $U(x):=u(\vert x\vert)$ solves \eqref{eq-moins}, or \eqref{eq-plus}, on $B(0,R)$. Also if $R<+\infty$ we require $\vert u(r)\vert +\vert u'(r)\vert \to +\infty$ as $r\nearrow R$, meaning that the solution is  maximal.
	
	It is said oscillating if $R=+\infty$ and $u$, nontrivial, has an unbounded sequence of zeros. 
	
	It is said localized if $R=+\infty$ and $u(+\infty)=0$.
\end{defi}

\begin{rem} [Regularity of radial solutions] 
	Following the construction in Section \ref{s:radial}, one can easily see that some solutions $u$ are of the class $\mathcal C^2$ on $[0,R)$, while some are only of the class $\mathcal C^1$ on $[0,R)$, $\mathcal C^2$ on $[0,R)\setminus\{t_{2\rightarrow 1}\}$ for some switching point $t_{2\rightarrow 1}$, with $u''(t_{2\rightarrow 1}^-)>u''(t_{2\rightarrow 1}^+)$. This jump in the second derivative occurs when the operator switches from a second order regime to a first order regime (the other way being harmless). See below for details. 
\end{rem}

\begin{rem}[Considering \eqref{eq-plus} is enough]\label{rem:enough}
Since $\P _k ^- U=-\P _k ^+(-U)$ and $g$ is odd, we deduce that $U$ solves \eqref{eq-moins} if and only if $-U$ solves \eqref{eq-plus}. As a result, we focus only on  equation \eqref{eq-plus}, but any of our stated result has its immediate counterpart for equation \eqref{eq-moins}.
\end{rem}

In order to state our main results, we need to define the following critical value: from Assumption \ref{ass:nonlinearity} there is a unique $\xi^{*}=\xi ^*(k)\in(\beta,\alpha)$ such that
\begin{equation}\label{def:xi-etoile}
kg^{2}(\beta)\int_\beta ^ {\xi^{*}} \frac{ds}{g(s)}=\int _\beta ^{\alpha}g(s)ds.
\end{equation}

%To do so, we need to define the following critical value: from Assumption \ref{ass:nonlinearity} there is a unique $\xi^{**}=\xi ^{**}(k)\in(\beta,\alpha)$ such that
%\begin{equation}\label{def:xi-etoile-etoile}
%\frac 1 k g^{2}(\beta)\int_\beta ^ {\xi^{**}} \frac{ds}{g(s)}=\int _\beta ^{\alpha}g(s)ds.
%\end{equation}
%Observe that $\beta<\xi ^{*}<\xi^{**}<\alpha$ as soon as $k\geq 2$.

We now state our main result for $k=1$, a case for which our description of radial solutions is exhaustive. 

\begin{theo}[Radial solutions, $k=1$]\label{th:radial-k-1} Let $k=1$. Then for any $\xi \in \R$, there is a unique radial solution to \eqref{eq-plus} starting from $u(0)=\xi$. Moreover, depending on $\xi$, it has the following properties.
\begin{enumerate}
\item [(i)] If $\xi\in\{-\alpha,0,\alpha\}$ then $u$ is constant (and $R=+\infty$).
\item [(ii)] If $\xi<-\alpha$ then  $u'<0$ on $(0,R)$, $u(r)\to -\infty$ as $r\nearrow R$. 
\item [(iii)] If $\xi>\alpha$ then  $u'>0$ on $(0,R)$, $u(r)\to +\infty$ as $r\nearrow R$. 
\item [(iv)] If $-\alpha < \xi<0$ then $R=+\infty$, $u'>0$ on $(0,+\infty)$, and $u(+\infty)=0$. 
\item [(v)] If $0 <\xi <\alpha$ then the three following outcomes are possible.
\begin{enumerate}
\item [(a)]  If $\xi^*<\xi <\alpha$ then $u'<0$ on $(0,R)$, $u(r)\to -\infty$ as $r\nearrow R$. 

\item [(b)] If $\xi=\xi^*$ then $R=+\infty$, $u'<0$ on $(0,+\infty)$, $u(+\infty)=-\alpha$. 

\item [(c)] If $0<\xi<\xi^*$ then $R=+\infty$, there is $b>0$ such that $u'<0$ on $(0,b)$, $u(b)<0$, $u'>0$ on $(b,+\infty)$, and $u(+\infty)=0$. 
\end{enumerate}
\end{enumerate}
\end{theo}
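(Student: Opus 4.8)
The plan is to reduce everything to the radial ODE and then do a phase-by-phase analysis governed by the sign of the switch quantity. For $U(x)=u(|x|)$ the Hessian $D^2U$ has eigenvalues $u''(r)$ (simple) and $\frac{u'(r)}{r}$ (multiplicity $N-1$), so for $k=1$ the operator is $\P_1^+U=\lambda_N(D^2U)=\max\{u''(r),\tfrac{u'(r)}{r}\}$ and \eqref{eq-plus} reads $\max\{u'',\tfrac{u'}{r}\}=-g(u)$. I would introduce $P(r):=u''(r)-\frac{u'(r)}{r}$: where $P\ge 0$ the solution obeys the \emph{second-order} regime $u''=-g(u)$, and where $P<0$ it obeys the \emph{first-order} relation $\frac{u'}{r}=-g(u)$, i.e. $u'=-rg(u)$. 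The first task is to fix the regime near the origin. Using $u'(0)=0$ and $u''(0)=-g(\xi)$, a Taylor expansion gives $P(r)=\tfrac13 g'(\xi)g(\xi)\,r^2+o(r^2)$, so the solution starts in the second-order regime when $g'(\xi)g(\xi)>0$ and in the first-order regime when $g'(\xi)g(\xi)<0$; by Assumption \ref{ass:nonlinearity} this sign is constant on each interval cut out by $0,\pm\beta,\pm\alpha$.

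The two analytic tools are the conserved energy $E=\tfrac12(u')^2+G(u)$, with $G(u):=\int_0^u g(s)\,ds$ even and attaining its strict maximum at $\pm\alpha$, valid in the second-order regime; and the explicit integration $\int_\xi^{u(r)}\frac{ds}{g(s)}=-\tfrac{r^2}{2}$ in the first-order regime, along which one checks $P=r^2g'(u)g(u)$, so that regime is self-consistent exactly while $g'(u)g(u)<0$. To detect switches in the second-order regime I would differentiate $P$ and use $u''=-g(u)$ to get the clean identity $(rP)'=-r\,g'(u)u'$, which recovers the sign of $P$ from the accumulated integral and pins down precisely where $P$ changes sign. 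Cases (i)--(iii) are then immediate: constants at the zeros $-\alpha,0,\alpha$ of $g$; for $\xi<-\alpha$ the solution stays first-order (since $g'g<0$ persists) with $u'<0$ and $u\to-\infty$; for $\xi>\alpha$ it stays second-order (since $g'g>0$ persists) with $u'>0$ and $u\to+\infty$, in both cases with no turning point by the explicit formula or energy.

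The heart is $0<\xi<\alpha$, where the solution first decreases. If $\xi\in(0,\beta)$ it descends in the second-order regime (using $(rP)'=-rg'u'>0$ to keep $P>0$) to a turning point at $u=-\xi$; if $\xi\in(\beta,\alpha)$ it descends in the first-order regime down to $u=\beta$, where $g'(\beta)=0$ forces a switch to the second-order regime at radius $r_s$ with $r_s^2=2\int_\beta^\xi\frac{ds}{g(s)}$ and $u'(r_s)=-r_sg(\beta)$, hence with energy $E=\tfrac12 r_s^2g^2(\beta)+G(\beta)$. The trichotomy of (v) comes from comparing $E$ to the barrier $G(\alpha)$: since $E=g^2(\beta)\int_\beta^\xi\frac{ds}{g(s)}+G(\beta)\gtrless G(\alpha)$ iff $g^2(\beta)\int_\beta^\xi\frac{ds}{g(s)}\gtrless\int_\beta^\alpha g$, i.e. iff $\xi\gtrless\xi^*$ by \eqref{def:xi-etoile}. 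Thus $\xi>\xi^*$ gives $E>G(\alpha)$, no turning point, and monotone descent to $-\infty$ (case (a)); $\xi=\xi^*$ gives $E=G(\alpha)$, the orbit lands on the stable branch of the saddle at $u=-\alpha$ (where $-g'(\alpha)>0$), so $u'<0$ and $u\to-\alpha$ (case (b)); $\xi<\xi^*$ gives $E<G(\alpha)$, a turning point at some $b$ with $u(b)\in(-\alpha,0)$, after which $u$ increases, undergoes exactly one second-order$\to$first-order switch in $(-\beta,0)$, and is then carried by $u'=-rg(u)>0$ monotonically to $u(+\infty)=0$ (case (c), which also absorbs $\xi\in(0,\beta]$). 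Case (iv), $-\alpha<\xi<0$, is handled identically: $u$ increases from the start, crosses at most one switch, and asymptotes to $0$.

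Uniqueness follows from the $\mathcal C^1$ regularity of $g$ (standard uniqueness in each regime), from the initial regime being determined by $\xi$, and from unique continuation at each switch, all resting on the well-posedness of the degenerate Cauchy problem at $r=0$ set up in Section \ref{s:radial}. The main obstacle is the bookkeeping of $P$: one must show switches occur exactly at the claimed locations and nowhere else — in particular that the second-order regime persists on the whole descent, that the first-order$\to$second-order switch is forced precisely at $u=\beta$, and that exactly one second-order$\to$first-order switch (the $C^1$-but-not-$C^2$ point, with $u''$ jumping down) occurs on the ascent — and to rule out spurious switches in the forbidden ranges where $g'g$ has the wrong sign. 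Equally delicate is verifying that the barrier $G(\alpha)$ cleanly separates the three asymptotic behaviours at $\xi^*$ and that the approaches to the equilibria $-\alpha$ and $0$ genuinely take place as $r\nearrow+\infty$.
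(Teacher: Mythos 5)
Your reduction and machinery are exactly those of the paper: the same switch quantity $P=\A u$, the same starting-regime criterion via the sign of $g'(\xi)g(\xi)$, the same energy/explicit-integration tools for the two regimes, the identity $(rP)'=-rg'(u)u'$ (the paper's \eqref{A-psi-prime} in integrated form), and the same trichotomy obtained by comparing the post-switch energy $E_0=g^2(\beta)\int_\beta^\xi\frac{ds}{g(s)}+G(\beta)$ with $G(\alpha)$, which is precisely the definition \eqref{def:xi-etoile} of $\xi^*$. However, there is one genuine error in your plan, in case (v)(a). You assert --- and explicitly list as a step to be proved --- that \emph{the second-order regime persists on the whole descent}. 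This is false in general, and it is always false for the model nonlinearity $g(u)=u-u^3$. Once the descending solution passes below $-\alpha$, its value lies in the admissible set $(-\infty,-\alpha)\cup(-\beta,0)\cup(\beta,\alpha)$ for a second-order-to-first-order switch (the paper's Corollary \ref{cor:switch-psi}, Lemma \ref{lem:switching} $(ii)$), and the switch is triggered as soon as $rg(u)$ overtakes $\vert u'\vert=\sqrt{2(E_0-G(u))}$: for Allen--Cahn, $rg(u)$ grows like $\vert u\vert^3$ while $\sqrt{2(E_0-G(u))}$ grows like $u^2$, so this always happens. Past that point, a function kept on the second-order ODE satisfies $\A u<0$, hence $\P_1^+U=\frac{u'}{r}\neq u''=-g(u)$, and it no longer solves \eqref{eq-plus}; so the persistence claim is unprovable, and the function your construction produces in case (v)(a) is not the solution beyond the switch.

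The gap is repairable exactly as the paper does it: allow the possible switching point $t_1$ with $\psi_0(t_1)<-\alpha$, glue the first-order solution $\varphi_1=\varphi(\cdot;t_1,\psi_0(t_1))$ there, and observe from Lemma \ref{lem:ODE1} $(iii)$ that below $-\alpha$ the first-order ODE still gives $u'<0$ and $u\to-\infty$, so the conclusion of (v)(a) is unchanged whether or not this second switch occurs. A smaller point: your starting-regime criterion is silent at $\xi=\pm\beta$, where $g'(\xi)g(\xi)=0$ and the leading Taylor coefficient vanishes; these values belong to cases (v)(c) and (iv) respectively, and deciding the regime there requires the finer argument of the paper's Lemma \ref{lem:starting} (SOE start at $\xi=\beta$, FOE start at $\xi=-\beta$, both using the second-order differentiability of $g$ at $\pm\beta$), not just the sign of $g'g$.
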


Hence depending on the initial value $\xi$, radial solutions can be: $(i)$ constant; $(ii)$ negative, decreasing and unbounded; $(iii)$ positive, increasing and unbounded; $(iv)$ negative, increasing and localized; $(v)$-$(a)$ sign changing, decreasing and unbounded; $(v)$-$(b)$ sign-changing, decreasing and bounded; $(v)$-$(c)$ sign-changing, decreasing-increasing and localized. Note that, when $k=1$, equations \eqref{eq-moins} and \eqref{eq-plus} do not admit periodic solutions, which is in sharp contrast with the case of the Laplacian \cite{Man-Mon-Pel-17} and of the Pucci's operators \cite{Ave-Pom-20}. 

As revealed below in \eqref{calcul-Pplus}, if $u''(r)-\frac 1 r u'(r)\geq 0$ then $\P _1^{+}U(x)=u''(r)$,  corresponding to the Laplacian in dimension $k=1$. Nevertheless, for $\vert \xi \vert <\alpha$, the behaviour of radial solutions to \eqref{eq-plus} is very different from those to the bistable equation involving the Laplacian operator in dimension one, namely
\begin{equation}
 v'' +g(v)=0, \quad v(0)=\xi \in(-\alpha,\alpha),\quad  v'(0)=0,
\end{equation}
which are all bounded, in sharp contrast with the case $(v)$-$(a)$ of Theorem \ref{th:radial-k-1}, sign-changing, in sharp contrast with the case $(iv)$ of Theorem \ref{th:radial-k-1}, and periodic, in sharp contrast with the cases $(iv)$ and $(v)$ of Theorem \ref{th:radial-k-1}. The reason is that, as revealed below in \eqref{calcul-Pplus}, if $u''(r)-\frac 1 r u'(r)\leq 0$ then $\P _1^{+}U(x)=\frac 1 r u'(r)$. As a result, before entering the \lq\lq Laplacian in dimension one regime'', the energy of the solution may be increased by a \lq\lq first order ODE regime'', explaining $(v)$-$(a)$ and $(v)$-$(b)$. Also, the solution always finishes his way by the \lq\lq first order ODE regime'', which prevents oscillations that one may have expected, for instance in the case $(v)$-$(c)$. 

\medskip

We now state our main result for $k\geq 2$.  

\begin{theo}[Radial solutions, $k\geq 2$]\label{th:radial-k-2} Let $k\geq 2$. Then for any $\xi \in \R$, there is a unique radial solution to \eqref{eq-plus} starting from $u(0)=\xi$. Moreover, depending on $\xi$, it has the following properties.
\begin{enumerate}
\item [(i)] If $\xi\in\{-\alpha,0,\alpha\}$ then $u$ is constant (and $R=+\infty$).
\item [(ii)] If $\xi<-\alpha$ then  $u'<0$ on $(0,R)$, $u(r)\to -\infty$ as $r\nearrow R$. 
\item [(iii)] If $\xi>\alpha$ then  $u'>0$ on $(0,R)$, $u(r)\to +\infty$ as $r\nearrow R$. 
\item [(iv)] If $-\alpha < \xi<0$ then $R=+\infty$, $u'>0$ on $(0,+\infty)$, and $u(+\infty)=0$. 
%\red{\item [(v)] If $0<\xi\leq \xi ^{{**}}$ \fbox{ et surement $0 <\xi <\alpha$ mais moins clair} then $R=+\infty$, there is $b>0$ such that $u'<0$ on some $(0,b)$, $u(b)<0$, $u'>0$ on $(b,+\infty)$, and $u(+\infty)=0$. 
%}
\item [(v)] If $0 <\xi <\alpha$ then there are positive numbers $\underline \xi$, $\xi^{**}$, $\overline \xi$ satisfying $\beta<\underline{\xi}\leq \xi^{**}\leq \overline \xi<\alpha$  such that the 
three following outcomes are possible.
\begin{enumerate}
\item [(a)]  If $\overline \xi<\xi <\alpha$ then $u'<0$ on $(0,R)$, $u(r)\to -\infty$ as $r\nearrow R$. 

\item [(b)] If $\xi=\xi^{**}$ then $R=+\infty$, $u'<0$ on $(0,+\infty)$, $u(+\infty)=-\alpha$. 

\item [(c)]  If  $0<\xi<\underline \xi$ then $R=+\infty$, there is $b>0$ such that $u'<0$ on $(0,b)$, $u(b)<0$, $u'>0$ on $(b,+\infty)$, and $u(+\infty)=0$. 
\end{enumerate}
\end{enumerate}
\end{theo}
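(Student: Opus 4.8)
The plan is to reduce \eqref{eq-plus} to a pair of one-dimensional regimes and then run a shooting argument in $\xi$, monitoring throughout the switching quantity $S(r):=u''(r)-\frac1r u'(r)$. For radial $U(x)=u(|x|)$ the Hessian has eigenvalues $u''(r)$ (simple) and $\frac1r u'(r)$ (multiplicity $N-1\ge k$), so summing the $k$ largest gives
\begin{equation*}
\mathcal P_k^+U = u'' + (k-1)\tfrac1r u' \ \ \text{if } S\ge 0, \qquad \mathcal P_k^+U = k\,\tfrac1r u' \ \ \text{if } S\le 0.
\end{equation*}
Thus \eqref{eq-plus} becomes the second order regime $u''+(k-1)\frac1r u'+g(u)=0$ while $S\ge0$, and the first order regime $u'=-\frac{r}{k}g(u)$ while $S\le0$. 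Substituting each ODE back into $S$ yields the two identities that drive everything: $S=\frac{r^2}{k^2}g'(u)g(u)$ in the first order regime and $S=-k\frac1r u'-g(u)$ in the second order regime. Near $r=0$ both regimes force $u''(0)=-g(\xi)/k$, and the expansion $u=\xi+\frac{u''(0)}2 r^2+O(r^4)$ gives $S\sim\frac{g'(\xi)g(\xi)}{k(k+2)}r^2$; hence the solution starts in the second order regime when $g'(\xi)g(\xi)>0$ and in the first order regime when $g'(\xi)g(\xi)<0$ (for $0<\xi<\alpha$ this is the dichotomy $\xi<\beta$ versus $\xi>\beta$, the value $\xi=\beta$ being resolved by $g''(\beta)<0$). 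Local existence and uniqueness follow from the regular singular theory of the second order regime and from separation of variables in the first order regime, continuation across a switch being obtained by matching $C^1$ data; a $1\to2$ switch keeps $u$ of class $C^2$ while a $2\to1$ switch produces the downward jump of $u''$ announced in the Remark.

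The monotone cases are then disposed of by energy bookkeeping, with $G(s):=\int_0^s g$ and $E:=\frac12(u')^2+G(u)$, which in the second order regime satisfies the dissipation identity $E'=-(k-1)\frac{(u')^2}{r}\le0$. For $\xi\in\{-\alpha,0,\alpha\}$ constancy is immediate. For $\xi>\alpha$ one has $u''(0)>0$, the solution starts increasing in the second order regime, one checks $S$ stays positive and $u$ stays above $\alpha$, so $-g(u)>0$ keeps $u$ increasing and unbounded, giving (iii); case (ii) is its mirror image. For $-\alpha<\xi<0$ the solution increases: when $\xi\in(-\beta,0)$ it remains in the first order regime and the divergence of $\int\frac{ds}{g(s)}$ at $0$ yields $R=+\infty$ with $u(+\infty)=0$; when $\xi\in(-\alpha,-\beta)$ a short second order phase starts with $E\le G(\xi)<G(\alpha)$, which by dissipation can never reach the barrier level $G(\alpha)$, so after the switch the first order regime carries $u$ monotonically up to $0$ — this is already where the expected oscillation is suppressed.

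The heart is case (v), $0<\xi<\alpha$, treated by shooting. For $\beta<\xi<\alpha$ the initial first order phase is integrated explicitly, the switch to the second order regime occurring at $t_{1\to2}^2=2k\int_\beta^\xi\frac{ds}{g(s)}$ with injected energy $E_0=G(\beta)+\frac{g^2(\beta)}{k}\int_\beta^\xi\frac{ds}{g(s)}$; one recognizes that \eqref{def:xi-etoile} is precisely the condition $E_0=G(\alpha)$ in the conservative case $k=1$. Since for $k\ge2$ the dissipation destroys conservation, I would define $\overline\xi:=\inf\{\xi:u\to-\infty\}$ and $\underline\xi:=\sup\{\xi:u \text{ localizes at }0\}$ and prove, by continuous dependence on $\xi$ and comparison, that the three outcomes are realized and ordered, producing $\xi^{**}\in[\underline\xi,\overline\xi]$ with $u(+\infty)=-\alpha$ as the separatrix value. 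The bound $\beta<\underline\xi$ comes from the fact that a sufficiently short first order phase (and, for $\xi<\beta$, the whole second order phase) keeps $E<G(\alpha)$, forcing a turning point $u_{\min}\in(-\alpha,0)$ followed by a final monotone first order run back to $0$; the bound $\overline\xi<\alpha$ comes from the fact that as $\xi\uparrow\alpha$ the injected energy exceeds $G(\alpha)$ by an amount the dissipation cannot absorb, sending $u$ below $-\alpha$ and then to $-\infty$.

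The main obstacle is the control of the second order regime for $k\ge2$. Because energy is no longer conserved, the asymptotic level cannot be computed and the thresholds must be obtained non-constructively, which also explains why only an interval $[\underline\xi,\overline\xi]$ — rather than a single value as for $k=1$ — is identified. Above all, I must exclude a priori the possibility of infinitely many $1\leftrightarrow2$ switches, which is exactly what an oscillating solution would be; the key step is to show that once $u'$ vanishes the sign of $S=-k\frac1r u'-g(u)$ forces a single $2\to1$ switch into a monotone first order regime from which there is no return. Establishing this, together with the monotone dependence on $\xi$ needed to order $\underline\xi\le\xi^{**}\le\overline\xi$, is the delicate part of the argument.
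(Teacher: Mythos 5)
Your reduction to the two regimes, the quadratic expansion of $\A u$ at $r=0$, the wall argument at $u=\beta$ excluding repeated switches, the explicit integration of the first order phase up to the switching radius \eqref{r-zero}, and the energy--dissipation bookkeeping are all sound, and they are essentially the steps the paper takes (Lemmas \ref{lem:switching} and \ref{lem:starting}, and the estimates \eqref{diff-energy-bis}, \eqref{diff-energy-bis-bis}). The genuine gap is in case $(v)$, in how you define the thresholds. You set $\overline\xi:=\inf\{\xi: u\to-\infty\}$ and $\underline\xi:=\sup\{\xi: u \text{ localizes at } 0\}$; with these definitions, statements $(a)$ and $(c)$ --- and even the ordering $\underline\xi\leq\xi^{**}\leq\overline\xi$ --- hold only if the blow-up set and the localization set are intervals, i.e.\ only if the outcome depends monotonically on $\xi$. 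You acknowledge that this ``monotone dependence'' is the delicate part, but it is not a deferrable technicality: it is precisely the open problem of the paper (Remark \ref{rem:not-exhaustive}, the conjecture that the set $B$ in the proof of Theorem \ref{th:ODE2-bis} is a singleton). No comparison principle is available here, because $\xi$ (equivalently $r_0$, through \eqref{r-zero}) enters both the switching position and the switching slope $-\frac{r_0}{k}g(\beta)$, so two members of the family are never ordered at their starting data. As written, your case $(v)$ cannot be completed.

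The paper circumvents exactly this obstruction by choosing the other admissible definitions: $\underline\xi$ is the supremum of those $\eta$ such that \emph{every} $\xi\in(\beta,\eta)$ localizes, and $\overline\xi$ the infimum of those $\eta$ such that \emph{every} $\xi\in(\eta,\alpha)$ blows up to $-\infty$. With these, $(a)$ and $(c)$ hold by construction; your energy estimates (a slightly weakened but still sufficient version of \eqref{tous-les-r}) give $\beta<\underline\xi$ and $\overline\xi<\alpha$; openness of the two sets by continuous dependence, plus connectedness of $(\beta,\alpha)$, yields some $\xi^{**}$ belonging to neither set, and disjointness gives the ordering --- no monotonicity in $\xi$ is needed, at the price that nothing is asserted on $[\underline\xi,\overline\xi]\setminus\{\xi^{**}\}$. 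Two smaller omissions in your trichotomy should also be filled: to conclude that a never-turning, bounded solution tends to $-\alpha$ you must exclude monotone decay to $0$, which requires a Sturm separation argument as in \cite[Lemma 2.5]{Man-Mon-Pel-17}; and the boundedness of the dissipation along the descent (which your ``amount the dissipation cannot absorb'' relies on) is not automatic --- it follows from the persistence of $\A\psi\geq 0$ down to $-\alpha$, which in turn requires the switching characterization of Corollary \ref{cor:switch-psi} (a switch on the descent would force $\psi\in(-\beta,0)$ with $\psi'>0$, incompatible with $\psi'<0$), and this is what legitimates the bound \eqref{diff-energy-bis}.
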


We conjecture that, in the setting of Theorem \ref{th:radial-k-2}, we  may take $\underline \xi=\xi^{**}=\overline \xi$,  so that the above description would be exhaustive. We refer to Theorem \ref{th:ODE2-bis} and Remark \ref{rem:not-exhaustive} for further details and comments on this delicate issue.

Let us underline that, using the one-to-one relation \eqref{r-zero} (see also subsection \ref{ss:switch}) and the lower bound in \eqref{tous-les-r}, one can check that $\underline \xi>\xi^*$, where $\xi^*=\xi^*(k)$ is defined through \eqref{def:xi-etoile}. Roughly speaking, this means that there are \lq\lq more'' localized solutions, of the type $(v)$-$(c)$, in the $k\geq 2$ case than in the $k=1$ case. The reason is the following.  As revealed below in \eqref{calcul-Pplus}, if $u''(r)-\frac 1 ru'(r)\geq 0$ then $\P _k ^+U(x)=u''(r)+\frac{k-1}{r}u'(r)$, corresponding to the Laplacian in dimension $k$.  In this regime, we therefore have to deal with a non autonomous second order ODE when $k\geq 2$, which typically dampens oscillations.  Consequently, the possible increase of the energy through a \lq\lq first order regime'' --- when $u''(r)-\frac 1 ru'(r)\leq 0$--- is less sensitive when $k\geq 2$. 

Note that, when $k\geq 2$, equations \eqref{eq-moins} and \eqref{eq-plus} do not admit oscillating solutions, which is again in sharp contrast with the case of the Laplacian \cite{Man-Mon-Pel-17} and of the Pucci's operators \cite{Ave-Pom-20}.

\medskip

The organization of the paper is as follows. Some preliminaries are presented in Section \ref{s:preli}: identification of two distinct regimes for the computation of the nonlinear truncated Laplacians of radial functions, and analysis of the needed ODE Cauchy problems. In Section \ref{s:radial}, we prove Theorem \ref{th:radial-k-1} and Theorem \ref{th:radial-k-2}, thus giving a complete description of radial solutions when $k=1$, and a nearly complete one when $k\geq 2$.

\section{Preliminaries}\label{s:preli}

\subsection{Truncated Laplacians of radial functions}\label{ss:radial}

Consider a  radial function
\begin{equation}
\label{ansatz}
U(x):=u\left(\vert x\vert\right), 
\end{equation}
for some $u=u(r)$, and where $\vert x\vert =(x_1^2+\cdots+x_N^2)^{1/2}$. We also require $u'(0)=0$.

After straightforward computations, we obtain the Hessian matrix
$$
D^{2}U(x)=\frac{1}{\vert x\vert}u'(r)Id_N-\left(\frac{1}{\vert x\vert}u'(r) -u''(r) \right)\frac{x}{\vert x\vert }\otimes\frac{x}{\vert x\vert},\quad  \text{ for all } x\neq 0,
$$
where we understand $\vert x\vert =r$. Since $\frac{x}{\vert x\vert}\otimes \frac{x}{\vert x\vert}$ is a matrix of rank 1, $\frac{1}{\vert x\vert}u '(r)$ is an eigenvalue of $D^{2}U(x)$ with multiplicity (at least) $N-1$. By considering the trace of the Hessian matrix we see that the remaining eigenvalue has to be $u ''(r)$. As a result, comparing $\frac{1}{\vert x\vert}u '(r)$ and $u''(r)$ is enough to compute $\mathcal P^{\pm} _k U(x)$. More precisely, denoting
\begin{equation}
\label{def:A}
\A u(r):=u''(r)-\frac 1 r u'(r),
\end{equation}
we have
\begin{equation}\label{calcul-Pmoins}
\P ^{-}_kU(x)=\begin{cases}\frac{k}{r}u'(r) &\text{ if } \A u(r)\geq 0 \vspace{5pt}\\
u''(r)+\frac{k-1}{r}u'(r) &\text{ if } \A u(r)\leq 0,
\end{cases}
\end{equation}
whereas
\begin{equation}\label{calcul-Pplus}
\P ^{+}_kU(x)=\begin{cases}u''(r)+\frac{k-1}{r}u'(r) &\text{ if }  \A u(r)\geq 0 \vspace{5pt}\\
\frac k r u'(r) &\text{ if } \A u(r)\leq 0.
\end{cases}
\end{equation}

As a result, when looking after radial solutions to \eqref{eq-moins} or \eqref{eq-plus}, we have to deal either with a first order ODE, namely
$$\frac k r u'+g(u)=0,
$$
or a second order ODE, namely 
$$
u''+\frac{k-1}{r}u'+g(u)=0,
$$
depending on the sign of $\A u$, and switches from a regime to another may occur. 

\subsection{Some first order ODE tools}\label{ss:ode-ordre-1}

For $r_0\geq 0$ and $\xi \in \R$, we consider  the  nonlinear first order ODE  Cauchy problem
\begin{equation}\label{cauchy-ordre1}
\left\{\begin{array}{lllll} \varphi'&=& -\frac r k g(\varphi),\\
\varphi(r_0) &=& \xi,
\end{array}\right.
\end{equation}
and denote $\varphi(r)=\varphi(r;r_0,\xi)$ its maximal solution defined on some open interval $I$ containing $r_0$. 

\begin{lem}[First order ODE Cauchy problem]\label{lem:ODE1}  If $\vert \xi \vert \leq \alpha$ then the solution $\varphi$ is global.  Next, depending on the initial data $\xi$, the following holds.
\begin{enumerate}
\item [(i)]  If $\xi\in \{-\alpha,0,+\alpha\}$ then $\varphi$ is constant.

\item [(ii)] If $0<\xi<+\alpha$ then  $\varphi '<0$ on $(r_0,+\infty)$, and $\varphi(r)\to 0$ as $r\to +\infty$. 

If $-\alpha <\xi<0$  then  $\varphi '>0$ on $(r_0,+\infty)$, and $\varphi(r)\to 0$ as $r\to +\infty$.

\item [(iii)]  If $\xi>+\alpha$ then $\varphi '>0$ on $I\cap (r_0,+\infty)$, and $\varphi(r)\to +\infty$ as $r\to \sup I$.

If $\xi<-\alpha$ then $\varphi '<0$ on $I\cap (r_0,+\infty)$, and $\varphi(r)\to -\infty$ as $r\to \sup I$.
\end{enumerate}
Last, in any case, we have
\begin{eqnarray}
\A \varphi (r)&=&-\frac {r} k \varphi'(r)g'(\varphi(r))=\frac {r^2}{k^2} g(\varphi(r))g'(\varphi(r)), \quad \forall r\in I, \label{A-phi}\\
(\A \varphi)' (r)&=& \left(\frac 2 r-\frac r kg'(\varphi(r))\right)\A \varphi (r)-\frac r k {\varphi'}^{2}(r)g''(\varphi(r)), \quad \forall r \in I\setminus\{0\}, \label{A-phi-prime}
%\frac {2r}{k^2} g(\varphi(r))g'(\varphi(r))+\frac {r^2}{k^2} g'^2(\varphi(r))\varphi'(r)+\frac {r^2}{k^2} g(\varphi(r))g''(\varphi(r))\varphi'(r),\label{A-phi-prime}
\end{eqnarray}
the latter obviously holding true when $g$ is twice differentiable at $\varphi(r)$ (in particular when $\varphi(r)=\pm\beta$).
\end{lem}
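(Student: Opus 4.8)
The plan is to combine an elementary phase-line analysis of the non-autonomous but separable equation \eqref{cauchy-ordre1} with two direct differentiations for the identities \eqref{A-phi}--\eqref{A-phi-prime}.

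First I would record the standing facts about $g$. Since $g$ is $\mathcal C^1$ it is locally Lipschitz, so \eqref{cauchy-ordre1} has a unique maximal solution $\varphi(\cdot\,;r_0,\xi)$; this uniqueness is the workhorse of the whole argument. From Assumption \ref{ass:nonlinearity} and oddness, the zeros of $g$ relevant here are exactly $-\alpha$, $0$, $\alpha$, and these give the only constant (equilibrium) solutions, which is (i). For $r>0$ the sign of $\varphi'=-\frac r k g(\varphi)$ is opposite to that of $g(\varphi)$; combined with uniqueness, no trajectory can cross an equilibrium, so each of the open intervals $(0,\alpha)$, $(\alpha,+\infty)$, $(-\alpha,0)$, $(-\infty,-\alpha)$ is invariant. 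This immediately yields the monotonicity statements in (ii)--(iii) and, when $|\xi|\leq\alpha$, confines $\varphi$ to the compact set $[-\alpha,\alpha]$, whence the solution is global by standard ODE theory.

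Next I would pin down the limits. Separating variables gives the implicit relation $\int_\xi^{\varphi(r)}\frac{ds}{g(s)}=-\frac{r^2-r_0^2}{2k}$. For $0<\xi<\alpha$ the monotone solution has a limit $\ell\in[0,\alpha)$ as $r\to+\infty$; since $g'(0)>0$ (Assumption (ii)) we have $g(s)\sim g'(0)s$ near $0$, so $\int_0^{\xi}\frac{ds}{g(s)}=+\infty$, forcing $\ell=0$ because the right-hand side tends to $-\infty$. The case $-\alpha<\xi<0$ follows by oddness, completing (ii). For the unbounded cases $|\xi|>\alpha$ I would argue by maximality: the solution is monotone with a limit $L\in(\alpha,+\infty]$ (resp. in $[-\infty,-\alpha)$); if $L$ were finite then either $\sup I<+\infty$, in which case $\varphi$ and $\varphi'=-\frac r k g(\varphi)$ stay bounded near $\sup I$ and the solution extends, contradicting maximality, or $\sup I=+\infty$, in which case $g(\varphi)\to g(L)\neq 0$ forces $|\varphi'|\gtrsim r$ for large $r$ and hence $|\varphi|\to+\infty$, a contradiction. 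Thus $L=\pm\infty$, giving (iii).

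Finally, the identities \eqref{A-phi}--\eqref{A-phi-prime} are obtained by differentiating \eqref{cauchy-ordre1}. Writing $\varphi''=-\frac 1 k g(\varphi)-\frac r k g'(\varphi)\varphi'$ and using $-\frac 1 k g(\varphi)=\frac 1 r \varphi'$ gives $\varphi''=\frac 1 r \varphi'-\frac r k g'(\varphi)\varphi'$, so that $\A\varphi=\varphi''-\frac 1 r\varphi'=-\frac r k \varphi' g'(\varphi)$; substituting $\varphi'=-\frac r k g(\varphi)$ yields the second equality in \eqref{A-phi}. Differentiating $\A\varphi=-\frac r k \varphi' g'(\varphi)$ once more and repeatedly re-substituting the relations $-\frac 1 k \varphi' g'(\varphi)=\frac 1 r \A\varphi$ and $\varphi''=\A\varphi+\frac 1 r\varphi'$ reorganizes the result into \eqref{A-phi-prime}; this is legitimate wherever $g$ is twice differentiable at $\varphi(r)$, in particular at $\varphi(r)=\pm\beta$ by Assumption (iii). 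The main obstacle is not any single step but keeping the limit arguments rigorous in the non-autonomous setting: the separable representation handles convergence to $0$ cleanly, whereas the blow-up statements genuinely require the maximality of $I$, since the growth of $g$ at infinity is not prescribed.
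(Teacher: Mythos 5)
Your proposal is correct and follows essentially the same route as the paper: Cauchy--Lipschitz uniqueness to trap trajectories between the equilibria $-\alpha,0,\alpha$ (giving (i), the monotonicity claims, and globality for $\vert\xi\vert\leq\alpha$), separation of variables with divergence of $\int_0\frac{ds}{g(s)}$ to get the limit $0$ in (ii), a maximality/escape argument for the blow-up alternatives in (iii), and direct differentiation of the ODE with re-substitution to obtain \eqref{A-phi} and \eqref{A-phi-prime}. Your write-up is in fact slightly more detailed than the paper's (which leaves the differentiation and the dichotomy in (iii) mostly to the reader), but the underlying argument is identical.
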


\begin{proof}  Item $(i)$ is clear since $-\alpha$, $0$ and $\alpha$ are the three zeros of $g$.
 
Let us prove $(ii)$. Assume $0<\xi<\alpha$, the other case being similar. Then, from Cauchy-Lipschitz theorem, $\varphi(r)$ is trapped  in $(0,\alpha)$ so that not only $I=\R$, but also $\varphi '<0$ on $(r_0,+\infty)$ from the ODE and Assumption \ref{ass:nonlinearity}.  Denote
\begin{equation}
\label{def:H}
H(t):=\int_t^\xi \frac{ds}{g(s)}, \quad 0<t\leq \xi.
\end{equation}
From Assumption \ref{ass:nonlinearity}, $H$ is a decreasing bijection from $(0,\xi)$ to $(0,+\infty)$. By separating variables, we can compute
\begin{equation}
\label{varphi}
\varphi(r)=H^{-1}\left(\frac{r^{2}-r_0^{2}}{2k}\right)\to 0, \quad \text{ as } r\to +\infty.
\end{equation}

Let us prove $(iii)$. Assume $\xi >\alpha$, the other case being similar. Then, from Cauchy-Lipschitz theorem, $\varphi(r)$ is trapped in $(\alpha,+\infty)$ so that  $\varphi '>0$ on $I\cap (r_0,+\infty)$ from the ODE and Assumption \ref{ass:nonlinearity}. If $\sup I<+\infty$ then one must have $\varphi(r)\to +\infty$ as $r\to \sup I$. On the other hand, if $\sup I=+\infty$ then, from the ODE, the limit in $+\infty$ cannot be finite (otherwise $\varphi'(r)\to +\infty$ which is a contradiction), and thus has to be $+\infty$.

Last, recalling $\A \varphi (r)=\varphi''(r)-\frac 1 r\varphi'(r)$, we differentiate once the ODE \eqref{cauchy-ordre1} to reach \eqref{A-phi}, that we differentiate to get \eqref{A-phi-prime}.
\end{proof}

For a solution $\varphi$,  we say that  $\A \varphi$ ceases to be nonpositive at $s_0>0$ 
if $(i)$ there is $\ep_1>0$ such that $\A \varphi \leq 0$ on $(s_0-\ep_1,s_0)$ and  $(ii)$ for all $\ep_2>0$, there is $s^*\in(s_0,s_0+\ep_2)$ such that $\A \varphi (s^*)>0$.

\begin{cor}\label{cor:switch-phi} $\A \varphi$ ceases to be nonpositive at $s_0>0$ if and only if $\varphi(s_0)=\beta$; and, if so, there is $\ep_3>0$ such that $\A\varphi >0$ on $(s_0,s_0+\ep_3)$. 
\end{cor}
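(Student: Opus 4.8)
The plan is to characterize exactly when $\A\varphi$ ceases to be nonpositive by exploiting the explicit formula \eqref{A-phi}, namely $\A\varphi(r)=\frac{r^2}{k^2}g(\varphi(r))g'(\varphi(r))$. Since $r>0$, the sign of $\A\varphi$ is governed entirely by the sign of the product $g(\varphi)g'(\varphi)$. Under Assumption \ref{ass:nonlinearity}, and recalling that for $0<\xi<\alpha$ the solution is trapped in $(0,\alpha)$ and strictly decreasing, I would tabulate the sign of $g$ and $g'$ on the relevant subintervals: on $(\beta,\alpha)$ one has $g>0$ and $g'<0$, so $g g'<0$ and $\A\varphi<0$; on $(0,\beta)$ one has $g>0$ and $g'>0$, so $gg'>0$ and $\A\varphi>0$. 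Thus as the decreasing solution crosses the level $\beta$, the sign of $\A\varphi$ flips from negative to positive, and the only interior crossing point is precisely where $\varphi=\beta$.

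First I would establish the \emph{if} direction. Suppose $\varphi(s_0)=\beta$. Since $\varphi$ is strictly monotone near $s_0$ (in the decreasing regime, $\varphi>\beta$ just before $s_0$ and $\varphi<\beta$ just after), the sign analysis above gives $\A\varphi<0$ immediately to the left and $\A\varphi>0$ immediately to the right, which yields both the definition of ``ceases to be nonpositive'' and the stronger conclusion that $\A\varphi>0$ on a right-neighbourhood $(s_0,s_0+\ep_3)$. One subtlety is that $g'(\beta)=0$, so $\A\varphi(s_0)=0$ exactly at the crossing; I must check the sign strictly on each side, not at $s_0$ itself. Here the twice-differentiability at $\beta$ with $g''(\beta)<0$ (Assumption \ref{ass:nonlinearity}(iii)) can be invoked if needed to confirm $g'$ changes sign cleanly, though the sign table alone suffices for the statement.

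For the \emph{only if} direction, I would argue contrapositively: if $\varphi(s_0)\neq\beta$, then $\A\varphi$ does not cease to be nonpositive at $s_0$. If $\varphi(s_0)\in(\beta,\alpha)$ then by continuity $\varphi$ stays in $(\beta,\alpha)$ near $s_0$, forcing $\A\varphi<0$ on a full two-sided neighbourhood, so condition (ii) of the definition fails. If $\varphi(s_0)\in(0,\beta)$ then $\A\varphi>0$ on a full neighbourhood, so condition (i) fails. The boundary cases $\varphi(s_0)\in\{0,\alpha\}$ cannot occur for an interior $s_0$ with $0<\xi<\alpha$ since the trajectory never reaches these equilibria in finite time (by Lemma \ref{lem:ODE1}(ii) the limits are attained only as $r\to+\infty$), so they need not detain us.

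The main obstacle I anticipate is purely a matter of careful bookkeeping of signs at the degenerate point: because $\A\varphi(s_0)=\frac{s_0^2}{k^2}g(\beta)g'(\beta)=0$ vanishes exactly when $\varphi=\beta$, the naive sign of $\A\varphi$ is ambiguous \emph{at} the crossing, and the whole content of the corollary is that this zero is a genuine sign change rather than a tangency. I would make this rigorous by noting $g(\beta)>0$ is fixed and positive, and that $g'(\varphi(r))$ is continuous and strictly changes sign at the level $\beta$ (being positive for arguments in $[0,\beta)$ and negative on $(\beta,+\infty)$ by Assumption \ref{ass:nonlinearity}(ii)); composing with the strictly monotone $\varphi$ transfers this clean sign change from the $\varphi$-variable to the $r$-variable, which is exactly what the definition of ``ceasing to be nonpositive'' requires.
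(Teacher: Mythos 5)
Your core idea --- reading the sign of $\A\varphi$ directly from \eqref{A-phi} as the sign of $g(\varphi)g'(\varphi)$, and transferring the clean sign change of $g'$ at the level $\beta$ to the $r$-variable through the strict monotonicity of $\varphi$ --- is sound, and on its domain of validity it is even more elementary than the paper's argument, which differentiates once more and uses \eqref{A-phi-prime} together with $g''(\pm\beta)$. But there is a genuine gap: you have silently restricted the entire proof to solutions with $0<\xi<\alpha$, trapped in $(0,\alpha)$. The corollary is stated for an arbitrary solution of \eqref{cauchy-ordre1}, with any initial data $(r_0,\xi)$, and this generality is really needed later: in Lemma \ref{lem:switching} $(i)$ the corollary is invoked for radial solutions following the first order equation with values in $(-\infty,-\alpha)$, in $[-\beta,0)$, and, after a switch, in $(-\beta,0)$, precisely to conclude that no switch to the second order equation can occur there. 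Your contrapositive only examines $\varphi(s_0)\in(0,\beta)\cup(\beta,\alpha)\cup\{0,\alpha\}$, so it says nothing about any of these cases.

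The case that is genuinely delicate, and that your proof never meets, is $\varphi(s_0)=-\beta$: there $g'(-\beta)=0$, so $\A\varphi(s_0)=0$ by \eqref{A-phi}, exactly as at $+\beta$, and it is the other candidate ceasing point; the whole \lq\lq only if'' direction hinges on ruling it out. The paper does this by computing, via \eqref{A-phi-prime} and the ODE, $(\A\varphi)'(s_0)=-\frac{s_0^3}{k^3}g^2(\pm\beta)g''(\pm\beta)$, which is positive at $+\beta$ but negative at $-\beta$ (oddness of $g$ plus Assumption \ref{ass:nonlinearity} $(iii)$), so that $\A\varphi<0$ just after $s_0$ and condition $(ii)$ of the definition fails. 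Your sign-table method can also close this case, and without ever using Assumption \ref{ass:nonlinearity} $(iii)$: since $\varphi'(s_0)=-\frac{s_0}{k}g(-\beta)>0$, a solution can only cross $-\beta$ increasing, coming from $(-\alpha,-\beta)$, where $g<0$ and $g'<0$ give $\A\varphi>0$ on a left-neighbourhood of $s_0$, so condition $(i)$ fails instead. Completing the table over all of $\R$ (namely $\A\varphi<0$ when $\varphi$ lies in $(-\infty,-\alpha)\cup(-\beta,0)\cup(\beta,\alpha)$, $\A\varphi>0$ when $\varphi$ lies in $(-\alpha,-\beta)\cup(0,\beta)\cup(\alpha,+\infty)$, the equilibria $0,\pm\alpha$ being unreachable by a nonconstant solution) turns your argument into a correct, and arguably cleaner, proof; as written, it proves a weaker statement than the one the paper needs.
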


\begin{proof}  If $\A\varphi$ ceases to be nonpositive at $s_0>0$ then $\A\varphi(s_0)=0$.  We deduce from \eqref{A-phi} that $g(\varphi(s_0))g'(\varphi(s_0))=0$. If $g(\varphi(s_0))=0$ then $\varphi(s_0)\in\{-\alpha,0,\alpha\}$ and, from the equation for $\varphi$, $\varphi'(s_0)=0$ so that $\varphi$ is constant, which is a contradiction. Hence $g'(\varphi(s_0))=0$, meaning $\varphi(s_0)=\pm\beta$, and, from \eqref{A-phi-prime} and the equation for $\varphi$, we have 
$(\A\varphi)'(s_0)=-\frac{s_0^3}{k^3}g^2(\pm\beta)g''(\pm \beta)$ which, see Assumption \ref{ass:nonlinearity}, is negative at $-\beta$ (hence a contradiction) and positive at $+\beta$. The converse is obviously true since then $\A\varphi(s_0)=0$ and $(\A\varphi)'(s_0)>0$.
\end{proof}

\subsection{Some second order ODE tools}\label{ss:ode-ordre-2}

For $r_0\geq 0$, $\xi \in \R$ and $\theta \in \R$ with
\begin{equation}
(r_0,\theta)\in (0,+\infty)\times \R \cup \{(0,0)\},
\end{equation}
 we consider  the  nonlinear second order ODE  Cauchy problem
\begin{equation}\label{cauchy-ordre2}
\left\{\begin{array}{lllll} \psi''+\frac  {k-1}r \psi'+g(\psi)&=&0,\\
\psi(r_0) &=& \xi,\\
\psi'(r_0)&=&\theta,
\end{array}\right.
\end{equation}
and denote by $\psi(r)=\psi(r;r_0,\xi,\theta)$ its maximal solution defined on some open interval $J$ containing $r_0$.

\begin{rem}[Sturm-Liouville approach]\label{rem:Sturm} {\it Stricto sensu}, when $k\neq 1$  and $(r_0,\theta)=(0,0)$, the Cauchy-Lipschitz theorem does not apply to \eqref{cauchy-ordre2}. However using a Sturm-Liouville approach, one can 
write $(r^{k-1}\psi')'=-r^{k-1}g(\psi(r))$ so that
\begin{equation}\label{psi-prime}
\psi'(r)=-\frac{1}{r^{k-1}}\int_0^r s^{k-1}g(\psi(s))ds,
\end{equation}
and then recast the ODE problem \eqref{cauchy-ordre2} into the integral equation
$$
\psi(r)=\xi -\int_0^r \frac{1}{s^{k-1}}
\int_0^s t^{k-1}g(\psi(t))dt ds.
$$
Under this form one can then prove that the conclusion of the Cauchy-Lipschitz theorem does hold, see \cite[Proposition 3.1]{Har-Wei-82} among others.

Furthermore, when $k\geq 2$, if $\psi$ solves the second order ODE on $(0,R)$ and can be extended by continuity at $r=0$, say $\psi(0)=\xi \in \R$, then, necessarily, $\psi'(0)=0$ and $\psi''(0)=-\frac{g(\xi)}{k}$ 
(notice that  the solution $\varphi$ to the first order Cauchy problem \eqref{cauchy-ordre1} also obviously satisfies $\varphi'(0)=0$ and $\varphi''(0)=-\frac{g(\xi)}{k}$).  Indeed, the Sturm-Liouville approach provides, for any $0<r_0<r<R$, 
$$
		r^{k-1}\psi'(r)-r_0^{k-1}\psi'(r_0)=-\int_{r_0}^{r}s^{k-1}g(\psi(s))ds.
$$
The right hand side has a finite limit as $r_0\to 0$ and, thus, so has $r_0^{k-1}\psi'(r_0)$; since $k\geq 2$ and $\psi(0)=\xi$, this limit has to be zero. As a result \eqref{psi-prime} is still valid and can be recast
$$
\psi'(r)=-r \int_0^1 z^{k-1}g(\psi(rz))dz,
$$
so that $\psi'(0)=0$ and $\frac{\psi'(r)}{r} \to -\frac{g(\xi)}{k}$  as $r \to 0$, as announced. In particular, when $k\geq 2$, the requirement $u'(0)=0$ is automatically satisfied by solutions in the sense of Definition \ref{def:radial}  (and also by all solutions that can be extended to zero, see subsection \ref{ss:final}).
\end{rem}

\begin{lem}[Monitoring $\A \psi$]\label{lem:monitoring} We have, for all $r\in J\setminus \{0\}$,
\begin{eqnarray}
\A \psi (r)&=&-\frac k r \psi'(r)-g(\psi(r)),\label{A-psi}\\
(\A \psi)'(r)&=&-\frac k r \A \psi(r)-\psi'(r)g'(\psi(r)), \label{A-psi-prime}\\
(\A \psi)''(r)&=&\frac{k}{r^{2}}\A \psi(r)-\frac k r (\A \psi)'(r)-\psi''(r)g'(\psi(r))-{\psi'}^{2}(r)g''(\psi(r)),\label{A-psi-prime-prime}
\end{eqnarray}
the latter obviously holding true when $g$ is twice differentiable at $\psi(r)$  (in particular when $\psi(r)=\pm\beta$). Also, for any $r_0\in J$,
	\begin{equation}\label{A-psi-integrated}
		\A\psi(r)=\A\psi (r_0)\frac{r_0^k}{r^{k}}-\int_{r_0}^{r}\frac{s^k}{r^k}\psi'(s)g'(\psi(s))ds,\quad 	\forall r\in J\setminus \{0\}. 
	\end{equation}
\end{lem}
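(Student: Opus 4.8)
The plan is to derive the three differential identities and the integrated formula by direct computation, using the definition $\A\psi(r)=\psi''(r)-\frac1r\psi'(r)$ together with the ODE \eqref{cauchy-ordre2}. First I would prove \eqref{A-psi}: since $\psi$ solves $\psi''+\frac{k-1}r\psi'+g(\psi)=0$, I can substitute $\psi''=-\frac{k-1}r\psi'-g(\psi)$ into the definition of $\A\psi$, giving $\A\psi=-\frac{k-1}r\psi'-g(\psi)-\frac1r\psi'=-\frac{k}r\psi'-g(\psi)$, which is exactly \eqref{A-psi}. This is the cornerstone, as the remaining identities follow by differentiating it.

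Next I would establish \eqref{A-psi-prime} by differentiating \eqref{A-psi}. Differentiating the right-hand side gives $(\A\psi)'=\frac{k}{r^2}\psi'-\frac kr\psi''-\psi'g'(\psi)$. The trick is to recognise that the first two terms reassemble into $\A\psi$: indeed $-\frac kr\bigl(\psi''-\frac1r\psi'\bigr)=-\frac kr\A\psi$, so $(\A\psi)'=-\frac kr\A\psi-\psi'g'(\psi)$, which is \eqref{A-psi-prime}. The second-order identity \eqref{A-psi-prime-prime} then follows by differentiating \eqref{A-psi-prime} once more via the product rule: differentiating $-\frac kr\A\psi$ yields $\frac{k}{r^2}\A\psi-\frac kr(\A\psi)'$, and differentiating $-\psi'g'(\psi)$ yields $-\psi''g'(\psi)-{\psi'}^2g''(\psi)$, giving exactly \eqref{A-psi-prime-prime}. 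As in Lemma~\ref{lem:ODE1}, the appeal to $g''$ is legitimate wherever $g$ is twice differentiable, in particular at $\psi=\pm\beta$ by Assumption~\ref{ass:nonlinearity}(iii).

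For the integrated formula \eqref{A-psi-integrated}, I would read \eqref{A-psi-prime} as a linear first-order ODE for $\A\psi$, namely $(\A\psi)'+\frac kr\A\psi=-\psi'g'(\psi)$. The integrating factor is $r^k$, so $\bigl(r^k\A\psi\bigr)'=-r^k\psi'g'(\psi)$; integrating from $r_0$ to $r$ and dividing by $r^k$ produces \eqref{A-psi-integrated} directly. I would note that this computation requires $r,r_0\in J\setminus\{0\}$, but the stated formula allows $r_0\in J$ including $r_0=0$; the case $r_0=0$ is recovered by taking the limit $r_0\to0^+$, using that the boundary term $\A\psi(r_0)\,r_0^k/r^k$ tends to $0$ (since $\A\psi$ stays bounded near $0$, as $\psi'(r)/r\to-g(\xi)/k$ by Remark~\ref{rem:Sturm} and $g'(\psi)$ is bounded), so no spurious contribution survives.

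The computations are entirely routine; the only points requiring care are bookkeeping of the $1/r$ terms when reassembling $\A\psi$ inside the derivatives, and the justification of the $r_0=0$ endpoint in \eqref{A-psi-integrated}. The latter is the sole genuine subtlety, and it is handled by the regularity information already extracted in Remark~\ref{rem:Sturm}; everything else is differentiation of \eqref{A-psi} together with repeated use of the defining ODE.
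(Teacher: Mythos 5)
Your proposal is correct and follows essentially the same route as the paper: substitute the ODE into $\A\psi=\psi''-\frac1r\psi'$ to get \eqref{A-psi}, differentiate (reassembling the $\frac kr\A\psi$ terms) for \eqref{A-psi-prime} and \eqref{A-psi-prime-prime}, then solve the linear equation \eqref{A-psi-prime} with integrating factor $r^k$ to obtain \eqref{A-psi-integrated}. Your extra care about the endpoint $r_0=0$ (boundedness of $\A\psi$ near the origin via Remark~\ref{rem:Sturm}) is a legitimate refinement the paper's terse proof leaves implicit, but it does not change the argument.
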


\begin{proof} Recalling $\mathcal A \psi(r)=\psi''(r)-\frac 1 r \psi'(r)$, the three first properties directly follow from using and differentiating the ODE. Last, solving \eqref{A-psi-prime} provides the integral representation.
\end{proof}

For a solution $\psi$,  we say that  $\A \psi$ ceases to be nonnegative at $s_0>0$ 
if $(i)$ there is $\ep_1>0$ such that $\A \psi \geq 0$ on $(s_0-\ep_1,s_0)$ and  $(ii)$ for all $\ep_2>0$, there is $s^*\in(s_0,s_0+\ep_2)$ such that $\A \psi (s^*)<0$.

\begin{cor}\label{cor:switch-psi} $\A \psi$ ceases to be nonnegative at $s_0>0$ if and only if $\psi(s_0)$ belongs to $(-\infty,-\alpha)\cup(-\beta,0)\cup(\beta,\alpha)$ and $\psi'(s_0)=-\frac{s_0}{k}g(\psi(s_0))$; and, if so, there is $\ep_3>0$ such that $\A\psi <0$ on $(s_0,s_0+\ep_3)$. 
\end{cor}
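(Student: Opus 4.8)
The plan is to mirror the proof of Corollary \ref{cor:switch-phi}, now relying on the monitoring formulas \eqref{A-psi}, \eqref{A-psi-prime} and \eqref{A-psi-prime-prime}. The two new difficulties compared to the first order case are that $\psi(s_0)$ and $\psi'(s_0)$ are independent (so the switch condition carries the extra requirement $\psi'(s_0)=-\frac{s_0}{k}g(\psi(s_0))$, which is nothing but $\A\psi(s_0)=0$ read off \eqref{A-psi}), and that the relevant sign of $gg'$ must be tracked on the whole real line.

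For the ``only if'' direction, I would start from the fact that, if $\A\psi$ ceases to be nonnegative at $s_0$, then by continuity $\A\psi(s_0)=0$; hence \eqref{A-psi} gives $\psi'(s_0)=-\frac{s_0}{k}g(\psi(s_0))$. Inserting $\A\psi(s_0)=0$ into \eqref{A-psi-prime} yields $(\A\psi)'(s_0)=-\psi'(s_0)g'(\psi(s_0))=\frac{s_0}{k}g(\psi(s_0))g'(\psi(s_0))$. Since $\A\psi\ge 0$ just to the left of $s_0$ and vanishes there, one must have $(\A\psi)'(s_0)\le 0$, so $g(\psi(s_0))g'(\psi(s_0))\le 0$.

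The main obstacle is ruling out the degenerate equality $g(\psi(s_0))g'(\psi(s_0))=0$. If $g(\psi(s_0))=0$, then $\psi(s_0)\in\{-\alpha,0,\alpha\}$ and $\psi'(s_0)=0$, so by Cauchy--Lipschitz $\psi$ is the corresponding constant and $\A\psi\equiv 0$, contradicting condition (ii). If instead $g'(\psi(s_0))=0$ with $g(\psi(s_0))\ne 0$, then $\psi(s_0)=\pm\beta$ and $(\A\psi)'(s_0)=0$, so I would turn to \eqref{A-psi-prime-prime}, which (using $\A\psi(s_0)=(\A\psi)'(s_0)=g'(\pm\beta)=0$) reduces to $(\A\psi)''(s_0)=-{\psi'}^2(s_0)g''(\pm\beta)$, with ${\psi'}^2(s_0)=\frac{s_0^2}{k^2}g^2(\pm\beta)>0$. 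By Assumption \ref{ass:nonlinearity}(iii) and oddness, $g''(\beta)<0$ and $g''(-\beta)>0$, hence $(\A\psi)''(s_0)>0$ at $+\beta$ and $<0$ at $-\beta$; a second-order Taylor expansion then shows $\A\psi\ge 0$ on both sides of $s_0$ at $+\beta$ (contradicting (ii)) and $\A\psi<0$ on both sides at $-\beta$ (contradicting (i)). Thus $\pm\beta$ are excluded and necessarily $g(\psi(s_0))g'(\psi(s_0))<0$.

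It then remains to locate where $gg'<0$: from $g$ odd with $g>0$ on $(0,\alpha)$, $g<0$ on $(\alpha,+\infty)$, and $g'>0$ on $(-\beta,\beta)$, $g'<0$ for $\vert t\vert>\beta$, a sign table gives $gg'<0$ exactly on $(-\infty,-\alpha)\cup(-\beta,0)\cup(\beta,\alpha)$, the announced set. Since moreover $(\A\psi)'(s_0)<0$ with $\A\psi(s_0)=0$, there is $\ep_3>0$ with $\A\psi<0$ on $(s_0,s_0+\ep_3)$, which is the last assertion. For the ``if'' direction, assuming $\psi(s_0)$ in that set and $\psi'(s_0)=-\frac{s_0}{k}g(\psi(s_0))$, formula \eqref{A-psi} gives $\A\psi(s_0)=0$ and \eqref{A-psi-prime} gives $(\A\psi)'(s_0)=\frac{s_0}{k}g(\psi(s_0))g'(\psi(s_0))<0$, so $\A\psi$ is positive just to the left and negative just to the right of $s_0$; this simultaneously proves that $\A\psi$ ceases to be nonnegative at $s_0$ and the conclusion $\A\psi<0$ on $(s_0,s_0+\ep_3)$.
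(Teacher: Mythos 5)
Your proof is correct and follows essentially the same route as the paper's: necessity via \eqref{A-psi}--\eqref{A-psi-prime} giving $g(\psi(s_0))g'(\psi(s_0))\le 0$, exclusion of $\{-\alpha,0,\alpha\}$ by uniqueness of the constant solution, exclusion of $\pm\beta$ via \eqref{A-psi-prime-prime} and the sign of $g''(\pm\beta)$, and the converse from $\A\psi(s_0)=0$ together with $(\A\psi)'(s_0)<0$. The only cosmetic difference is at $\pm\beta$: the paper deduces $(\A\psi)''(s_0)=0$ from the definition of ceasing to be nonnegative and contradicts it with the formula, whereas you compute the sign of $(\A\psi)''(s_0)$ from the formula and contradict the definition by a Taylor expansion---the same argument read in the opposite direction.
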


\begin{proof}  If $\A\psi$ ceases to be nonnegative at $s_0>0$ then $\A\psi(s_0)=0$ and $(\A\psi)'(s_0)\leq 0$.  We deduce from \eqref{A-psi} and \eqref{A-psi-prime} that $g(\psi(s_0))g'(\psi(s_0))\leq 0$, which means 
$\psi(s_0)\in(-\infty,-\alpha]\cup [-\beta,0]\cup [\beta,\alpha]$. If $\psi(s_0)\in \{-\alpha,0,\alpha\}$, we deduce from  $\psi'(s_0)=-\frac{s_0}{k}g(\psi(s_0))=0$ that $\psi$ is constant,  which is a contradiction.  If $\psi(s_0)=\pm \beta$, we deduce from \eqref{A-psi} and \eqref{A-psi-prime} that $\A\psi(s_0)=(\A\psi)'(s_0)=0$, which enforces $(\A\psi)''(s_0)=0$,  which is contradicted by \eqref{A-psi-prime-prime}. Hence $\psi(s_0)\in(-\infty,-\alpha)\cup (-\beta,0)\cup(\beta,\alpha)$ and $\psi'(s_0)=-\frac{s_0}{k}g(\psi(s_0))$. The converse is obviously true since then $\A\psi (s_0)=0$ and $(\A\psi)'(s_0)<0$.
\end{proof}

We now distinguish the case $k=1$ from the case $k\geq 2$.

\subsubsection{The autonomous case $k=1$}

%For $r_0\geq 0$, $\xi \in \R$ and $\theta \in \R$, we consider  the  nonlinear second order ODE  Cauchy problem
%\begin{equation}\label{cauchy-ordre2-k-egal-1}
%\left\{\begin{array}{lllll} \psi''+g(\psi)&=&0,\\
%\psi(r_0) &=& \xi,\\
%\psi'(r_0)&=&\theta,
%\end{array}\right.
%\end{equation}
%and denote $\psi(r)=\psi(r;r_0,\xi,\theta)$ its maximal solution defined on some open interval $J$ containing $r_0$. 

This case is well understood: apart from being constant, solutions can be unbounded  (case $(i)$ below), heteroclinic orbits or standing waves  (see $(ii)$ below) or periodic  (case $(iii)$ below).  In the sequel, we denote
$$
G(t):=\int_0 ^t g(s)ds.
$$ 

\begin{lem}[Second order ODE Cauchy problem, $k=1$]\label{lem:ODE2-k-egal-1} Let $k=1$. Define the initial energy
\begin{equation}
\label{energy-initial}
E=E(\xi,\theta):=\frac 12 \theta ^2+G(\vert \xi\vert).
\end{equation}
Depending on the initial data $(\xi,\theta)$, the following holds.

\begin{enumerate}
\item [(i)] If $\vert \xi \vert >\alpha$ or $E>G(\alpha)$ then $\psi(r)\to +\infty$ or $\psi(r)\to -\infty$ as $r\to \sup J$.

\item [(ii)] If $\vert \xi\vert \leq \alpha$ and $E=G(\alpha)$ the following three situations may occur: if $\theta=0$ then $\psi\equiv -\alpha$ or $\psi \equiv \alpha$; if $\theta<0$ then $\psi'<0$ on $\R$, $\psi(-\infty)=\alpha$, $\psi(+\infty)=-\alpha$; if $\theta>0$ then $\psi'>0$ on $\R$, $\psi(-\infty)=-\alpha$, $\psi(+\infty)=\alpha$.

\item [(iii)] If $\vert \xi\vert < \alpha$ and $0<E<G(\alpha)$ then the solution $\psi$ is trapped in the interval $(-\alpha,\alpha)$, is $T$ periodic for some $T=T(\xi,\theta)>0$,  
has exactly two zeros on any $[a,a+T)$, has exactly two critical points on any $[a,a+T)$ where it moreover changes monotonicity, is symmetric with respect to any critical point (that is $\psi'(r^*)=0 \Rightarrow \psi(r^*-\cdot)=\psi(r^*+\cdot)$), and is anti-symmetric  with respect to any zero (that is $\psi(r^*)=0\Rightarrow  \psi(r^*-\cdot)=-\psi(r^*+\cdot)$). Moreover $\psi$ is a surjection from $\R$ to $[-M,M]$ where $\vert \xi\vert \leq M <\alpha$ is (uniquely) determined by
$$
\int _{\vert \xi \vert}^{M}g(s)ds =\frac 12 \theta ^2,
$$
and the period is given by
\begin{equation*}
\frac{T}{2}=\int_{-M}^{M}\frac{ds}{\sqrt{2(E-G(s))}}=2\int_{0}^{M}\frac{ds}{\sqrt{2(E-G(s))}}.
\end{equation*}

%Assuming further that $\beta \leq \vert \xi \vert <\alpha$ the time $\tau=\tau(\xi,\theta)>0$ for the solution to cross the interval $(-\beta,\beta)$ is given %by
%\begin{equation}\label{def:tau}
%\frac{\tau}{2}=\int_{-\beta}^{\beta}\frac{ds}{\sqrt{2(E-G(s))}}=2\int_{0}%^{\beta}\frac{ds}{\sqrt{2(E-G(s))}},
%\end{equation}
%so that
%\begin{equation}
%\label{def:tau-min}
%\tau \geq \frac{2\sqrt 2 \beta}{\sqrt{G(\alpha)}}=:\tau_{min}>0.
%\end{equation}

\item [(iv)] If $\vert \xi\vert \leq \alpha$ and $E=0$, meaning $(\xi,\theta)=(0,0)$, then $\psi\equiv 0$. 
\end{enumerate}
\end{lem}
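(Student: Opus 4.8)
The plan is to use that for $k=1$ the equation in \eqref{cauchy-ordre2} is autonomous, $\psi''+g(\psi)=0$, hence conservative. First I would multiply by $\psi'$ and integrate to obtain the first integral
\begin{equation*}
\tfrac12\psi'(r)^2+G(\psi(r))\equiv \tfrac12\theta^2+G(\xi)=E,
\end{equation*}
where the last equality uses that $G$ is even (because $g$ is odd), so $G(\xi)=G(|\xi|)$ matches \eqref{energy-initial}. Everything then reduces to reading the orbits off the level sets $\{\tfrac12 p^2+G(q)=E\}$ in the $(q,p)=(\psi,\psi')$ plane, after recording the shape of $G$ forced by Assumption \ref{ass:nonlinearity}: $G$ is even, has a strict minimum $G(0)=0$, is a strictly increasing bijection from $[0,\alpha]$ onto $[0,G(\alpha)]$ with $G(\alpha)=\max_{\R}G$, and decreases to $-\infty$ on $[\alpha,+\infty)$. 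The equilibria are $(0,0)$, a center since $g'(0)>0$, and $(\pm\alpha,0)$, saddles since $g'(\alpha)<0$. Case $(iv)$ is then immediate: $|\xi|\le\alpha$ gives $G(\xi)\ge0$, so $E=0$ forces $\theta=0$ and $\xi=0$, i.e. $\psi\equiv0$.

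For $(iii)$, with $0<E<G(\alpha)$ and $|\xi|<\alpha$, the equation $G(M)=E$ has a unique root $M\in(0,\alpha)$ by the bijection property, and $\int_{|\xi|}^M g(s)\,ds=\tfrac12\theta^2$ is merely $G(M)-G(|\xi|)=\tfrac12\theta^2$, which also gives $|\xi|\le M<\alpha$. I would check that the orbit is confined to $[-M,M]\subset(-\alpha,\alpha)$ with $\psi'=\pm\sqrt{2(E-G(\psi))}$ vanishing exactly at $\pm M$, where $g(\pm M)\neq0$ makes these non-degenerate turning points reached in finite time (the integral $\int^M ds/\sqrt{E-G(s)}$ converges). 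The symmetries come from reversibility: if $\psi'(r^*)=0$ then $r\mapsto\psi(2r^*-r)$ solves the same Cauchy problem, so $\psi$ is even about $r^*$; if $\psi(r^*)=0$ then, $g$ being odd, $r\mapsto-\psi(2r^*-r)$ solves it too, so $\psi$ is odd about $r^*$. Composing these reflections yields $T$-periodicity with $\psi$ surjecting onto $[-M,M]$, exactly two critical points and two zeros per period, and separating variables on one monotone branch from $-M$ to $M$ produces the half-period formula.

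For $(ii)$, with $E=G(\alpha)$ and $|\xi|\le\alpha$: if $\theta=0$ then $G(|\xi|)=G(\alpha)$ forces $\xi=\pm\alpha$ and $\psi$ is the corresponding equilibrium; if $\theta\neq0$ then $|\xi|<\alpha$ and $\tfrac12\psi'(r)^2=G(\alpha)-G(\psi)>0$ on $(-\alpha,\alpha)$, so $\psi$ is strictly monotone and the orbit is the heteroclinic connection between the two saddles, with the stated monotonicity and limits, approached in infinite time so that $J=\R$. For $(i)$, when $E>G(\alpha)$ one has $\tfrac12\psi'(r)^2\ge E-G(\alpha)>0$ everywhere, so $\psi'$ never vanishes, $\psi$ is strictly monotone and cannot converge to a finite limit (there $\psi'$ would tend to a nonzero value), hence $\psi\to\pm\infty$ as $r\to\sup J$; when $\xi>\alpha$ the orbit lives on the outer slope $\psi\ge\psi_*$ with $G(\psi_*)=E$, escaping to $+\infty$ directly if $\theta\ge0$, or after reflecting at the finite turning point $\psi_*>\alpha$ if $\theta<0$ and $E<G(\alpha)$ (the case $\xi<-\alpha$ being symmetric).

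The step I expect to demand the most care is the critical energy $E=G(\alpha)$, where the level set contains the saddles $(\pm\alpha,0)$: the delicate point is to separate the trajectories lying on their stable manifolds --- which approach $\pm\alpha$ as $r\to\sup J$ rather than escaping --- from the neighbouring escaping ones, and to justify that such an approach is always asymptotic and never in finite time, which follows from hyperbolicity, $g'(\pm\alpha)<0$. This same finite-versus-infinite transit-time dichotomy is what separates the genuine turning points at $\pm M$ in $(iii)$ from the asymptotic approach to $\pm\alpha$ in $(ii)$; making the phase-portrait picture rigorous amounts to quantitative control of $\int ds/\sqrt{E-G(s)}$ near its singularities.
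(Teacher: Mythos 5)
Your route is exactly the paper's: conservation of $\tfrac12{\psi'}^2+G(\psi)$ together with a reading of the level sets in the $(\psi,\psi')$ phase plane. The paper states this and then omits essentially all details, so your filled-in arguments --- reversibility giving the two symmetries in $(iii)$, the non-degenerate turning points at $\pm M$ reached in finite time, the divergent transit time at the hyperbolic equilibria $\pm\alpha$ separating $(ii)$ from $(iii)$, and the sub-case $E>G(\alpha)$ of $(i)$ --- are correct and are what the paper has in mind.

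There is, however, a genuine gap in your treatment of $(i)$ when $\vert\xi\vert>\alpha$. You enumerate only the sub-cases $\theta\ge 0$, and $\theta<0$ with $E<G(\alpha)$ (the sub-case $\theta<0$ with $E>G(\alpha)$ being absorbed by your first argument). The omitted sub-case is $\theta<0$ with $E=G(\alpha)$, i.e. $\xi>\alpha$ and $\theta=-\sqrt{2\left(G(\alpha)-G(\xi)\right)}$ (symmetrically for $\xi<-\alpha$ with $\theta>0$). This datum satisfies the hypothesis of $(i)$ but not its conclusion: it sits on the stable manifold of the saddle $(\alpha,0)$, the solution is strictly decreasing, cannot reach $\alpha$ in finite time (at such a point one would have $(\psi,\psi')=(\alpha,0)$, and uniqueness for the autonomous equation $\psi''+g(\psi)=0$ would force $\psi\equiv\alpha$), hence it is global and $\psi(r)\to\alpha$ as $r\to\sup J=+\infty$, not to $\pm\infty$. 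So no argument can complete your case analysis: item $(i)$ as stated is false in this borderline case, and the only fix is to modify the statement, e.g. restrict $(i)$ to $E>G(\alpha)$, or to $\vert\xi\vert>\alpha$ with $E\neq G(\alpha)$ or with outward-pointing velocity $\xi\theta\geq 0$, or else add this convergent behaviour to the conclusion. Notably, your closing paragraph contains precisely the key observation --- trajectories on the stable manifolds approach $\pm\alpha$ instead of escaping --- but you did not notice that for $\vert\xi\vert>\alpha$ such trajectories contradict the statement you were proving. For what it is worth, the paper's own one-line proof (\lq\lq standard phase plane analysis\rq\rq, \lq\lq details are omitted\rq\rq) glosses over the same point, and the defect is harmless downstream: in Section \ref{s:radial} the lemma is only invoked with initial datum $\beta\in(0,\alpha)$, that is under $(ii)$, $(iii)$, or $(i)$ with $E_0>G(\alpha)$, never under the clause $\vert\xi\vert>\alpha$.
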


\begin{proof} The associated energy is conserved, that is
\begin{equation}\label{energy}
\frac 12 {\psi'}^2(r)+G(\psi(r))=E,\quad   \forall r\in J.
\end{equation}
From this and a standard phase plane analysis, see Figure \ref{fig:phase-plane}, we classically reach all the desired conclusions. For instance, in the case $(iii)$, it follows from $E<G(\alpha)$ and \eqref{energy} that $\psi(r)$ cannot touch $\alpha$  nor $-\alpha$. Hence, $\psi(r)$ is trapped in $(-\alpha,\alpha)$, $(\psi(r),\psi'(r))$ cannot blow up in finite time, and thus the solution is global. Next, by combining the phase plane analysis and the identity
$$
\psi'(r)=\pm \sqrt{2(E-G(\psi(r))},
$$
we reach the conclusions of $(iii)$. Details are omitted. 
\end{proof}

\begin{figure}
\begin{center}
  \includegraphics[width=8cm]{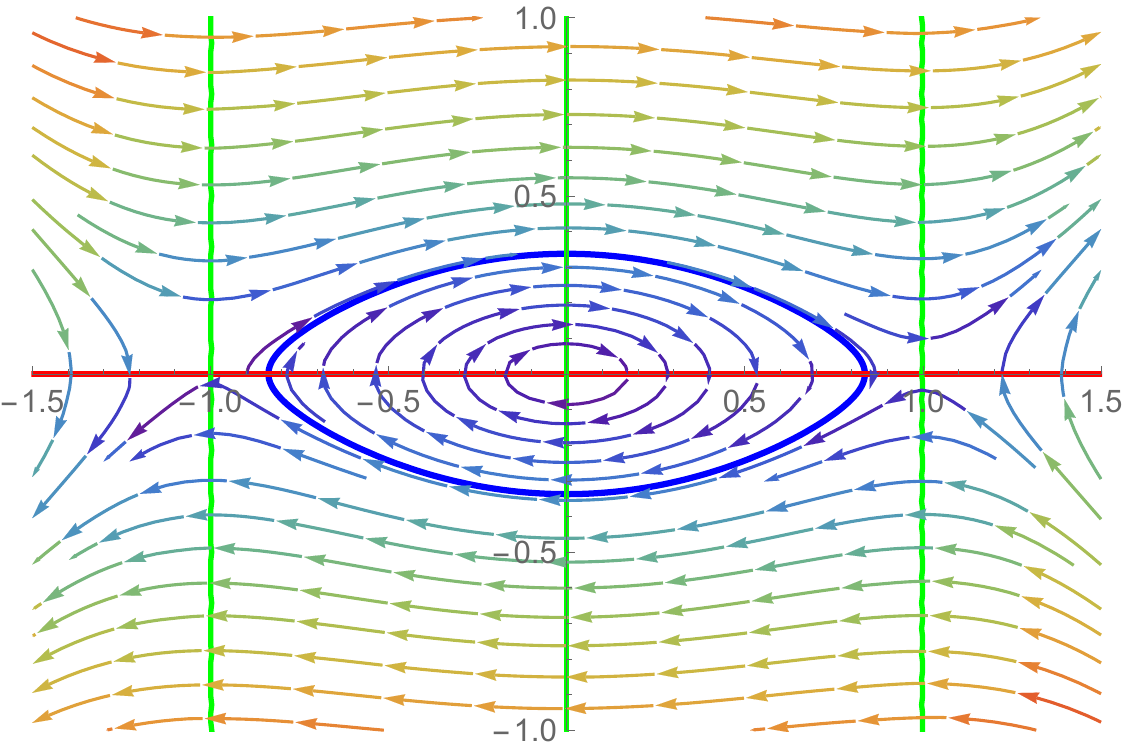}
    \caption{Phase plane analysis for \eqref{cauchy-ordre2} with $k=1$ and the cubic nonlinearity $g(\psi)=\frac 1 4(\psi-\psi^3)$. In red, the nullcline $\psi=0$, in green the nullcline $\psi '=0$, in blue the trajectory starting from $\xi=0.6$, $\theta=-0.2$, providing a periodic solution.}
   \label{fig:phase-plane}
   \end{center}
\end{figure}

\subsubsection{The non autonomous case $k\geq 2$}

The non autonomous case $k\geq 2$ is more tricky. Nevertheless, when $(r_0,\theta)=(0,0)$, the comprehension of the Cauchy problem is still  complete, whatever $k\geq 1$.

\begin{lem}[Second order ODE Cauchy problem, the case  $(r_0,\theta)=(0,0)$]\label{lem:ODE2} Let $k\geq 1$. Assume $(r_0,\theta)=(0,0)$. If $\vert \xi \vert \leq \alpha$ then the solution $\psi$ is global.  Next, depending on the initial data $\xi$, the following holds.
\begin{enumerate}
\item [(i)]  If $\xi\in \{-\alpha,0,+\alpha\}$ then $\psi$ is constant.

\item [(ii)] If $0<\vert \xi\vert <\alpha$ then $\psi$ is oscillating, satisfies $\Vert \psi \Vert_{L^\infty(\R)}=\xi$,
$$
{\psi '}^2(r)\leq 2 \int _{\psi (r)}^\xi g(t)dt\leq 2 G(\xi), \quad \forall r\in\R,
$$
and
$$
\frac{c}{r^{\frac{k-1}{2}}}\leq \vert \psi(r)\vert +\vert \psi'(r)\vert +\vert \psi''(r)\vert  \leq \frac{C}{r^{\frac{k-1}{2}}}, \quad \forall r\geq 1,
$$
for some constants $c=c(\xi)>0$, $C=C(\xi)>0$. Moreover, the critical points of $\psi$ on $[0,+\infty)$ form a sequence $0=x_0<x_1<\cdots<x_n \to +\infty$ such that $\psi(x_k)\psi(x_{k+1})<0$.

\item [(iii)]  If $\xi>+\alpha$ then $\psi '>0$ on $J\cap (0,+\infty)$, and $\psi(r)\to +\infty$ as $r\to \sup J$.

If $\xi<-\alpha$ then $\psi '<0$ on $J\cap (0,+\infty)$, and $\psi(r)\to -\infty$ as $r\to \sup J$.
\end{enumerate}
\end{lem}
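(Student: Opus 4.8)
The plan is to run everything through the dissipated energy $\mathcal{E}(r):=\frac12\psi'(r)^2+G(\psi(r))$, after two harmless reductions. First, by oddness of $g$ and uniqueness (which holds even at $(r_0,\theta)=(0,0)$ thanks to the integral reformulation in Remark~\ref{rem:Sturm}), I may assume $\xi\ge 0$, since $\psi(\cdot;0,-\xi,0)=-\psi(\cdot;0,\xi,0)$. Item $(i)$ is then immediate: for $\xi\in\{0,\alpha\}$ the constant function $\psi\equiv\xi$ solves \eqref{cauchy-ordre2} (as $g(\xi)=0$) with the prescribed data, and uniqueness forces $\psi$ to equal it. Differentiating and using the ODE gives $\mathcal E'(r)=\psi'(r)\bigl(\psi''+g(\psi)\bigr)=-\frac{k-1}{r}\psi'(r)^2\le 0$, so $\mathcal E$ is nonincreasing with $\mathcal E(0)=G(\xi)$. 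When $0<\xi<\alpha$ one has $G(\xi)<G(\alpha)$, and since $G$ is even and strictly increasing on $[0,\alpha]$, the relation $G(\psi(r))\le\mathcal E(r)\le G(\xi)$ forbids $\psi$ from ever reaching $\pm\alpha$; hence $\psi$ is trapped in $(-\alpha,\alpha)$, the solution is global, $G(\psi)\ge 0$ there yields $\psi'^2\le 2(G(\xi)-G(\psi))=2\int_\psi^\xi g\le 2G(\xi)$, and evaluating $\mathcal E$ at the critical points (where $\mathcal E=G(\psi)\le G(\xi)$) gives $\|\psi\|_{L^\infty}=\xi$.

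For item $(iii)$, take $\xi>\alpha$, so $g(\xi)<0$ and $\psi''(0)=-g(\xi)/k>0$, whence $\psi$ starts increasing. I would show $\psi'>0$ on all of $J\cap(0,\infty)$ by contradiction: if $r_1$ is the first zero of $\psi'$, monotonicity gives $\psi(r_1)>\xi>\alpha$, so $\psi''(r_1)=-g(\psi(r_1))>0$, contradicting $\psi''(r_1)\le 0$ forced by $\psi'>0$ on $(0,r_1)$. Thus $\psi$ increases and stays above $\alpha$. To get $\psi\to+\infty$ I use $(r^{k-1}\psi')'=-r^{k-1}g(\psi)>0$: if $\psi$ tended to a finite limit $L>\alpha$, then $g(\psi)\to g(L)<0$, so $-r^{k-1}g(\psi)\gtrsim r^{k-1}$ for large $r$, and integration gives $\psi'(r)\gtrsim r\to\infty$, incompatible with convergence; a short energy argument shows a bounded monotone $\psi$ would force $\sup J=+\infty$, covering both finite and infinite $\sup J$. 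The case $\xi<-\alpha$ follows by oddness.

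It remains to treat $(ii)$, split into oscillation, the structure of critical points, and the decay rate. For oscillation I apply the Liouville transformation $w(r):=r^{(k-1)/2}\psi(r)$, which turns the ODE into $w''+\tilde Q\,w=0$ with $\tilde Q(r)=q(r)-\frac{(k-1)(k-3)}{4r^2}$ and $q(r):=g(\psi(r))/\psi(r)$. Since $|\psi|\le\xi<\alpha$ and $g(s)/s$ extends continuously and positively to $s=0$ with value $g'(0)>0$, one has $q\ge c_0:=\inf_{0<|s|\le\xi}g(s)/s>0$, hence $\tilde Q\ge c_0/2>0$ for large $r$; Sturm comparison with $v''+\tfrac{c_0}{2}v=0$ then forces $w$, and thus $\psi$, to vanish on every interval of length $>\pi\sqrt{2/c_0}$, ruling out $\psi$ being eventually of one sign and proving oscillation. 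For the critical points, $\psi'(r^*)=0$ together with $\psi(r^*)=0$ would give $\psi\equiv0$; since moreover $|\psi|<\alpha$, each critical value lies in $(-\alpha,0)\cup(0,\alpha)$, so $\psi''(r^*)=-g(\psi(r^*))\neq0$ and every critical point is a nondegenerate extremum, a maximum exactly when $\psi(r^*)\in(0,\alpha)$ and a minimum exactly when $\psi(r^*)\in(-\alpha,0)$. Hence maxima and minima strictly alternate, consecutive critical values have opposite sign ($\psi(x_n)\psi(x_{n+1})<0$), and the Sturm spacing bound makes them an unbounded increasing sequence $0=x_0<x_1<\cdots$.

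The decay estimate is the main obstacle, and I expect it in two stages. First I would show $\mathcal E_\infty:=\lim_{r\to\infty}\mathcal E(r)\ge 0$ vanishes: from $\mathcal E(0)-\mathcal E_\infty=(k-1)\int_0^\infty \frac{\psi'^2}{s}\,ds<\infty$, whereas if $\mathcal E_\infty>0$ the amplitude would stay bounded below while the Sturm bound keeps the gaps $x_{n+1}-x_n$ bounded (so $x_n\lesssim n$), making each oscillation contribute at least $c/x_{n+1}$ to the integral and forcing divergence of $\sum 1/x_n$, a contradiction; thus $\psi(r)\to 0$. Second, to pin the sharp two-sided rate I would prove $\mathcal E(r)\asymp r^{-(k-1)}$ by equipartition: writing $\mathcal E'=-\frac{2(k-1)}{r}\cdot\frac12\psi'^2$ and using that over each oscillation the averages of the kinetic energy $\frac12\psi'^2$ and of $\mathcal E$ are comparable (valid as $\psi\to0$ makes $q\to g'(0)$, so the motion is nearly harmonic), one obtains $\frac{d}{dr}\log\mathcal E\approx-\frac{k-1}{r}$, hence $c\,r^{-(k-1)}\le\mathcal E(r)\le C\,r^{-(k-1)}$. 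Since $|\psi|,|\psi'|$ are controlled above and below by $\sqrt{\mathcal E}$ (through the energy identity and $G(\psi)=\mathcal E$ at turning points) and $|\psi''|\le\frac{k-1}{r}|\psi'|+|g(\psi)|\lesssim|\psi|+|\psi'|$ for $r\ge1$, this delivers $c\,r^{-(k-1)/2}\le|\psi|+|\psi'|+|\psi''|\le C\,r^{-(k-1)/2}$. The delicate point is making equipartition rigorous, i.e. controlling the difference between $\frac12\psi'^2$ and $\frac12\mathcal E$ uniformly over periods as $r\to\infty$; alternatively one can transfer the question to the energy $\mathcal F=\frac12 w'^2+\frac12\tilde Q w^2$ of the transformed equation, whose derivative $\mathcal F'=\frac12\tilde Q'\,w^2$ must be shown integrable so that $\mathcal F$ converges to a positive limit.
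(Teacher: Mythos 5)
Your items (i) and (iii) are correct and essentially identical to the paper's own argument: the same first-zero-of-$\psi'$ contradiction shows monotonicity, and the same Sturm--Liouville identity \eqref{psi-prime} excludes a finite limit when $\sup J=+\infty$. The substantive difference is item (ii). The paper does not prove it at all: it is borrowed wholesale from \cite{Man-Mon-Pel-17} (Proposition 2.3 and Lemma 2.5 there, together with their proofs). You instead attempt a self-contained proof, and a good part of it works: the dissipated energy $\mathcal E=\frac12\psi'^2+G(\psi)$ correctly gives trapping in $(-\alpha,\alpha)$, globality, $\Vert\psi\Vert_{L^\infty}=\xi$ and the gradient bound; the substitution $w=r^{(k-1)/2}\psi$ plus Sturm comparison (using $\inf_{0<\vert s\vert\leq\xi}g(s)/s>0$) gives oscillation; the nondegeneracy/alternation analysis of critical points is sound; and your argument that $\mathcal E(+\infty)=0$ (bounded gaps between critical points versus finiteness of $(k-1)\int\psi'^2/s\,ds$) can be made rigorous as sketched, yielding localization.

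However, there is a genuine gap, which you yourself flag: the two-sided rate $c\,r^{-(k-1)/2}\leq\vert\psi\vert+\vert\psi'\vert+\vert\psi''\vert\leq C\,r^{-(k-1)/2}$ is never actually established, and it is the hard analytic content of the lemma. The ``equipartition'' step --- passing from $\mathcal E'=-\frac{k-1}{r}\psi'^2$ to $\frac{d}{dr}\log\mathcal E\approx-\frac{k-1}{r}$ because the kinetic energy averages to half of $\mathcal E$ over each oscillation --- is left as a heuristic: you give no mechanism (comparison of consecutive half-periods, discrete Gronwall, or similar) that converts the ``$\approx$'' into two-sided constants uniformly as $r\to+\infty$, and note that \emph{both} bounds need it, since $\frac{d}{dr}\left(r^{k-1}\mathcal E\right)=(k-1)r^{k-2}\left(G(\psi)-\frac12\psi'^2\right)$ has no sign. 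The alternative route you mention --- showing $\mathcal F'=\frac12\tilde Q'w^2$ is integrable so that $\mathcal F=\frac12 w'^2+\frac12\tilde Q w^2$ has a positive finite limit --- is also not carried out, and it is not routine under the paper's hypotheses: $\tilde Q'$ involves $q'(r)=\psi'(r)\,\bigl(g'(\psi)\psi-g(\psi)\bigr)/\psi^2$, and since Assumption \ref{ass:nonlinearity} makes $g$ only $\mathcal C^1$ near $0$, the quotient $\bigl(g'(s)s-g(s)\bigr)/s^2$ need not stay bounded as $s\to0$; integrability of $\tilde Q'w^2$ therefore already requires quantitative decay information on $\psi$ of the very kind being sought. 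So, as written, your proposal proves the oscillation and localization statements of (ii) but not the sharp rate; to be complete you would either need to supply this uniform averaging argument or, as the paper does, invoke \cite[Proposition 2.3 and Lemma 2.5]{Man-Mon-Pel-17}.
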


\begin{proof}  Item $(i)$ is clear, whereas item $(ii)$ is borrowed from \cite{Man-Mon-Pel-17}, more precisely Proposition 2.3 and Lemma 2.5 together with their proofs. 
 
%\medskip

%Let us prove $(ii)$. 

%Assume $\xi\in(0,\alpha)$. From the ODE we have $\psi''(0)=-\frac{g(\xi)}{k}<0$, so that there is $\ep>0$ such that $\psi'<0$ on $(0,\ep)$. Also \eqref{psi-prime} and Assumption \ref{ass:nonlinearity} show that $\psi'<0$ as long as $\psi >0$. 

%Assume by contradiction that $\psi >0$ on $(0,R)$,  which in particular prevents blow-up and thus $R=+\infty$. 

%We multiply the ODE by $\psi '$ and integrate from $0$ to $r$ to get
%$$
%\frac 12 {\psi'}^2(r)\leq \int _{\psi (r)}^\xi g(t)dt
%$$

%Hence we can define $0<R^{*}\leq +\infty$ as
%$$
%r_0:=\sup \{s>0: \forall r\in (0,s), \psi (r)>0 \text{ and } \psi'(r)<0\}.
%$$

%If $r_0=+\infty$ then the solution is global, $R=+\infty$, and has a  limit $0\leq \ell<\xi<\alpha$ in $+\infty$. If $\ell \neq 0$ than, from \eqref{psi-prime} we deduce $\psi'(r)\sim \frac{g(\ell)}{k}r$ as $r\to +\infty$, a contradiction. Hence $\ell =0$.

%If $r_0<+\infty$. The case $\psi(r_0)=\psi'(r_0)=0$ is excluded (otherwise $\psi\equiv 0$). If $\psi'(r_0)=0$, $\psi (r_0)>0$ then testing the ODE at $r=r_0$ we reach $\psi''(r_0)<0$. Hence $\psi$ reaches a local maximum at $r=0$ and $r=r_0$. As a result $\psi$ reaches a local minimum at some $r^{*}\in (0,r_0)$, with a value $\psi(r^{*})\in (0,\alpha)$. Testing the ODE at $r=r^{*}$ we reach a contradiction. Hence we are left with the case where $\psi(r_0)=0$, $\psi'(r_0)<0$.

%\medskip

Let us prove $(iii)$. Since $g$ is odd, we check that $\psi(r;0,-\xi,0)=-\psi(r;0,\xi,0)$, so we  only need to consider  $\xi >\alpha$. From the ODE we have $\psi''(0)=-\frac{g(\xi)}{k}>0$, so that there is $\ep>0$ such that $\psi'>0$ on $(0,\ep)$. 

Assume by contradiction the existence of $r_0>0$ such that $\psi'(r_0)=0$. We may assume that $r_0$ is the smallest positive value with this property so that $\psi(r_0)>\xi>\alpha$ and $\psi''(r_0)\leq 0$. Testing the ODE at $r=r_0$ we reach $\psi''(r_0)>0$, a contradiction. As a result $\psi'>0$ on $J\cap (0,+\infty)$. 

Hence, if $\sup J<+\infty$ then one must have $\psi(r)\to +\infty$ as $r\to \sup J$. On the other hand, if $\sup J=+\infty$, let us assume by contradiction that the limit $\ell>\xi>\alpha$ of $\psi(r)$ in $+\infty$ is finite. From \eqref{psi-prime} we deduce $\psi'(r)\sim \frac{-g(\ell)}{k}r$ as $r\to +\infty$, which contradicts $\ell<+\infty$. As a result, $\ell=+\infty$ and we are done with $(iii)$.
\end{proof}

Last, we  need a comprehension of the  non autonomous case $k\geq 2$, starting from $r_0>0$. When $k\geq 2$, oscillations are typically dampened and some solutions, not trapped in the $k=1$ case, become trapped, and thus oscillating, when $k\geq 2$. Note that many results exist \cite{Mai-98}, \cite{Cab-Eng-Gad-09}, \cite{Har-Jen-13}, when the potential $G$ is convex and/or coercive. In the present bistable situation, the potential $G$ is neither convex nor coercive and, therefore, the issue of blow-up solutions is intricate. Therefore, we believe  Theorem \ref{th:ODE2-bis} and its proof are interesting by themselves in the framework of ODEs.

For our purpose in Section \ref{s:radial},  we only need to consider the one-parameter family of solutions $\psi(r;r_0,\xi,\theta)$ with $r_0>0$, $\xi=\beta$, $\theta=-\frac{g(\beta)}{k}r_0$, which corresponds to a switch from the first order ODE to the second order one for the radial solutions of  \eqref{eq-plus}, see Section \ref{s:radial}.

\begin{theo}[Second order ODE Cauchy problem, the case $r_0>0$]\label{th:ODE2-bis} Let $k\geq 2$. For $r_0>0$, let us denote $\psi(r):=\psi\left(r;r_0,\beta,-\frac{g(\beta)}{k}r_0\right)$. Then (recall that $G(t)=\int_0^t g(s)ds$) there are positive real numbers $\underline{r_0}$, $r_0^{**}$, $\overline{r_0}$ satisfying 
\begin{equation}\label{tous-les-r}
\frac{k\sqrt{2}}{g(\beta)}\left( G(\alpha)-\frac{1}k G(\beta) \right) ^{\frac{1}{2}}\leq \underline{r_0}\leq r_0^{**}\leq \overline{r_0}\leq  \frac{k\sqrt{2}}{g(\beta)}\left( G(\alpha)-G(\beta)+\frac{k-1}k g(\beta)(\alpha+\beta) \right) ^{\frac{1}{2}},
\end{equation}
and such that the following holds.
\begin{enumerate}
\item [(i)] If $0<r_0 <\underline{r_0}$ then, for $r>r_0$, the solution $\psi$ is trapped in $(-\alpha,\alpha)$, global, oscillating and localized. Moreover, there is a sequence $r_0<r_1<\cdots<r_n\to +\infty$ such that, for any $k\in \N$, $\psi$ is strictly monotone on $(r_k,r_{k+1})$, and $\psi(r_{2k})\psi(r_{2k+1})<0$. 
\item [(ii)] If $r_0=r_0^{**}$ then $\psi '<0$ on $(r_0,+\infty)$ and $\psi(+\infty)=-\alpha$. 
\item [(iii)] If $r_0>\overline{r_0}$ then $\psi '<0$ on $J\cap (r_0,+\infty)$, and $\psi(r) \to -\infty$ as $r\to \sup J$.
\end{enumerate}

%\begin{equation}
%\label{new}
%\fbox{CONJECTURE!}\quad \frac 1 2 \theta ^2 < kg^2(\beta) \int _{\vert \xi%\vert } ^\alpha \frac{ds}{g(s)}.
%\end{equation}
\end{theo}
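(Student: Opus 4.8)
The plan is to run a shooting argument in the parameter $r_0$, driven by the natural energy
$$\mathcal{E}(r):=\tfrac12\psi'(r)^2+G(\psi(r)),$$
which along \eqref{cauchy-ordre2} satisfies $\mathcal{E}'(r)=-\tfrac{k-1}{r}\psi'(r)^2\le 0$ since $k\ge 2$. Thus the energy is nonincreasing, and its initial value, using $\psi'(r_0)=-\tfrac{g(\beta)}{k}r_0$, is
$$\mathcal{E}(r_0)=\frac{g(\beta)^2}{2k^2}\,r_0^2+G(\beta),$$
a strictly increasing function of $r_0$ — this monotone dependence is what organizes the whole picture. Recall that $G$ is even, vanishes at $0$, is positive on $[-\alpha,\alpha]\setminus\{0\}$, attains its maximum $G(\alpha)$ over $[-\alpha,\alpha]$ at $\pm\alpha$, and decreases to $-\infty$ outside $[-\alpha,\alpha]$. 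Hence $\psi$ can reach $\pm\alpha$ only if $\mathcal{E}\ge G(\alpha)$ there, and while $\mathcal{E}>G(\alpha)$ the solution can have no critical point inside $[-\alpha,\alpha]$. Everything reduces to comparing the decreasing running energy with the barrier $G(\alpha)$, starting from $\psi(r_0)=\beta$, $\psi'(r_0)<0$.

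First I would treat small $r_0$ (case (i)). If $\mathcal{E}(r_0)<G(\alpha)$ then $\mathcal{E}(r)<G(\alpha)$ for all $r$, so $\psi$ never reaches $\pm\alpha$: it is trapped in $(-\alpha,\alpha)$, hence global. Pushing the trapping threshold up to the lower bound in \eqref{tous-les-r} requires a one-sided \emph{lower} estimate of the energy dissipated on the first descent $\psi:\beta\to 0$, obtained by changing variables from $r$ to $\psi$ along the monotone branch and using $r\ge r_0$; this is where the factor $\tfrac1k$ in front of $G(\beta)$ comes from. Once $\psi$ is trapped, the total dissipation $\int_{r_0}^{\infty}\tfrac{k-1}{s}\psi'^2\,ds=\mathcal{E}(r_0)-\lim_{r\to\infty}\mathcal{E}(r)$ is finite; since $\int^{\infty}\tfrac{ds}{s}=\infty$, this forces $\mathcal{E}(r)\to 0$, hence $\psi(r)\to 0$ (localization), while $g'(0)>0$ together with the vanishing damping $\tfrac{k-1}{r}\to 0$ prevents monotone convergence and yields the announced oscillations. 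This step follows the lines of Lemma \ref{lem:ODE2}$(ii)$; cf. \cite{Man-Mon-Pel-17}.

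Next I would treat large $r_0$ (case (iii)), bounding from \emph{above} the energy dissipated on the descent from $\beta$ down to $-\alpha$: writing it as $(k-1)\int_{-\alpha}^{\beta}\tfrac{|\psi'|}{s(\psi)}\,d\psi$, using $s(\psi)\ge r_0$ and $|\psi'|\le\sqrt{2(\mathcal{E}(r_0)-G(\psi))}$, one obtains a bound of order $O(1)$ whose leading term is $\tfrac{k-1}{k}g(\beta)(\alpha+\beta)$. Thus as soon as $\mathcal{E}(r_0)-\tfrac{k-1}{k}g(\beta)(\alpha+\beta)>G(\alpha)$ — which holds precisely above the upper bound in \eqref{tous-les-r} — the running energy stays above $G(\alpha)$ all the way to $-\alpha$. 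Consequently $\psi$ has no critical point in $[-\alpha,\alpha]$, decreases monotonically, crosses $-\alpha$ with $\psi'<0$, and then escapes to $-\infty$ exactly as in Lemma \ref{lem:ODE2}$(iii)$ (once $\psi<-\alpha$, $G(\psi)\to-\infty$ keeps $\psi'$ bounded away from $0$). Defining $\underline{r_0}$ and $\overline{r_0}$ as the exact thresholds of cases (i) and (iii) then gives the two outer inequalities in \eqref{tous-les-r}.

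Finally, the connecting solution (case (ii)) is produced by continuity. By Cauchy–Lipschitz away from the origin, $\psi(\cdot;r_0,\beta,-\tfrac{g(\beta)}{k}r_0)$ depends continuously on $r_0>0$. The set of $r_0$ for which $\psi$ eventually drops strictly below $-\alpha$ is open (a transversal crossing persists), as is the set for which $\psi$ stays trapped with $\sup\mathcal{E}<G(\alpha)$; by the two previous steps the former contains a half-line and the latter an interval $(0,\cdot)$. Any boundary point $r_0^{**}$ between them yields a solution that neither crosses $-\alpha$ nor turns back, forcing $\psi\to-\alpha$ with $\psi'<0$ — a finite touching $\psi(r_1)=-\alpha$, $\psi'(r_1)=0$ being excluded, since uniqueness would then give $\psi\equiv-\alpha$, against $\psi(r_0)=\beta$ — and $\underline{r_0}\le r_0^{**}\le\overline{r_0}$ since $r_0^{**}$ lies in neither open set. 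I expect this last step to be the main obstacle: because the equation is non-autonomous and the orbits oscillate, the behaviour need not be monotone in $r_0$, so one cannot exclude an interval of mixed behaviour between $\underline{r_0}$ and $\overline{r_0}$. This is exactly why the three thresholds are not shown to coincide and the description is only conjecturally exhaustive; securing the sharp constants in \eqref{tous-les-r} is the remaining technical, but conceptually routine, point, resting on the one-sided dissipation estimates above.
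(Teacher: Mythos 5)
Your overall architecture is the same as the paper's: the trichotomy (trapped/oscillating/localized, monotone decrease to $-\alpha$, blow-down), openness of the two extreme sets, nonemptiness of the middle set by connectedness of $(0,+\infty)$, and energy--dissipation bookkeeping for the thresholds. The paper implements continuity in $r_0$ through a rescaling $\psi(r)=v(r/r_0)$ (so that only the nonlinearity and the initial slope depend on $r_0$), but your direct appeal to continuous dependence serves the same purpose, and your exclusion of a finite tangency with $-\alpha$ by uniqueness is fine.

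The genuine gap is in the derivation of the two explicit constants in \eqref{tous-les-r}, and it is not a matter of polish: these constants are part of the statement, and the lower one is used later in the paper to prove $\underline\xi>\xi^*$. In the paper both constants rest on the structural fact that $\A\psi\geq 0$ along the initial descent, guaranteed by \eqref{bidule} and Corollary \ref{cor:switch-psi}, which combined with the ODE yields the two-sided control
\begin{equation*}
\frac{g(\psi(s))}{k}\;\leq\;\frac{-\psi'(s)}{s}\;\leq\;\frac{g(\beta)}{k},
\end{equation*}
the right-hand inequality because $(\psi'/r)'=\A\psi/r\geq0$ while $\psi'(r_0)/r_0=-g(\beta)/k$. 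You never invoke $\A\psi$, and your substitutes fail in both directions. For the lower threshold, a \emph{lower} bound on the dissipation $(k-1)\int\psi'^2/s\,ds$ cannot come from ``$r\geq r_0$'': that inequality caps the dissipation rate from \emph{above}. The factor $\tfrac1k$ in $G(\alpha)-\tfrac1kG(\beta)$ actually comes from the left-hand inequality above, which gives dissipation at least $\tfrac{k-1}{k}\int_0^\beta g=\tfrac{k-1}{k}G(\beta)$ on the descent from $\beta$ to $0$ (this is \eqref{diff-energy-bis-bis}); without it, your trapping criterion $\mathcal E(r_0)<G(\alpha)$ only yields the strictly smaller threshold $\tfrac{k\sqrt2}{g(\beta)}\left(G(\alpha)-G(\beta)\right)^{1/2}$. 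For the upper threshold, your bound ($s\geq r_0$ together with $\vert\psi'\vert\leq\sqrt{2(\mathcal E(r_0)-G(\psi))}$) gives dissipation at most $\tfrac{k-1}{k}g(\beta)(\alpha+\beta)+O(r_0^{-2})$, and the correction term is strictly positive for Allen--Cahn-type nonlinearities; hence your criterion only places a half-line $(\overline{r_0}',+\infty)$, with $\overline{r_0}'$ strictly larger than the stated constant, inside the blow-down set, so the sentence ``which holds precisely above the upper bound in \eqref{tous-les-r}'' is a non sequitur. The paper gets the exact constant with no error term from the right-hand inequality above, which bounds the dissipation by $\tfrac{k-1}{k}g(\beta)(\beta-\psi(r))$ (this is \eqref{diff-energy-bis}). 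To repair your proof, first establish $\A\psi\geq0$ up to the first zero (respectively the first critical point) of $\psi$ via \eqref{bidule} and Corollary \ref{cor:switch-psi}, then rerun your energy bookkeeping using the displayed two-sided control in place of ``$r\geq r_0$''.
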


\begin{rem}[A  conjecture] \label{rem:not-exhaustive}
As already emphasized above, such an analysis does not stand in the classical frameworks, since the potential $G$ is neither convex nor coercive. Another important difficulty comes from the fact that we look at a one-parameter family: $r_0$ determines both the starting point (at $r=r_0$) and the starting slope ($\psi'(r_0)=-\frac{g(\beta)}{k}r_0$). Because of that, the description in Theorem \ref{th:ODE2-bis} is not exhaustive. In the proof below, we conjecture that the set $B$ is actually  a singleton. In other words, we conjecture that, in the setting of Theorem \ref{th:ODE2-bis}, we may take $\underline{r_0}=r_0^{**}=\overline{r_0}$.
\end{rem}

\begin{proof}  We temporarily denote $\psi(r)=\psi\left(r;r_0,\beta,-\frac{g(\beta)}{k}r_0,g\right)$ to remind the dependence on the nonlinearity $g$. We observe that $\psi(r)=v\left(\frac{r}{r_0}\right)$, where
$$
v(r)=\psi\left(r;1,\beta,-\frac{g(\beta)}{k}r_0^2,r_0^2 g\right).
$$
In the sequel, in order to fix the initial point as 1, we work on $v$ and denote $J_v$ its maximal interval of validity.

If there is a (first) point $r^*>1$  where $v'(r^*)=0$ then, from the equation, $v(r^*)\in(-\alpha,0)$. Then, using the equation and the energy identity 
\begin{equation}\label{energy-decrease}
\frac 1 2 {v'}^2(r)+r_0^2G(v(r))=\frac 12{v'}^2(t)+r_0^2G(v(t))-(k-1)\int _t ^r \frac{{v'}^2(s)}{s}ds,
\end{equation}
we easily see that $v$ is trapped in $(-\alpha,\alpha)$, global, oscillating and localized, which corresponds to case $(i)$. On the other hand if $v'<0$, note that the situation $v(+\infty)=0$ is excluded by 
a Sturm separation argument  similar to \cite[Lemma 2.5]{Man-Mon-Pel-17}. As a result, we  have the partition $(0,+\infty)=A\cup B \cup C$ where
\begin{eqnarray*}
A&:=&\{r_0>0: \exists r^*>1, v'(r^*)=0 \text{ and } v(+\infty)=0\}\\
B&:=&\{r_0>0: v' <0 \text{ on } [1,+\infty) \text{ and } v(+\infty)=-\alpha\}\\
C&:=&\{r_0>0:v' <0 \text{ on } J_v\cap [1,+\infty) \text{ and } v(r)\to -\infty \text{ as } r\nearrow \sup J_v\}.
\end{eqnarray*}

From the continuity of solutions with respect to initial conditions and parameters, both the sets $A$ and $C$ are open.

Now, returning to $\psi(r)$ we naturally have, see Figure \ref{fig:A-B-C},
\begin{eqnarray*}
A&=&\{r_0>0: \exists r^*>r_0, \psi'(r^*)=0 \text{ and } \psi(+\infty)=0\}\\
B&=&\{r_0>0: \psi' <0 \text{ on } [r_0,+\infty) \text{ and } \psi(+\infty)=-\alpha\}\\
C&=&\{r_0>0:\psi' <0 \text{ on } J_\psi\cap [r_0,+\infty) \text{ and } \psi(r)\to -\infty \text{ as } r\nearrow \sup J_\psi\},
\end{eqnarray*}
with obvious notations.

\begin{figure}
\begin{center}
  \includegraphics[width=10cm]{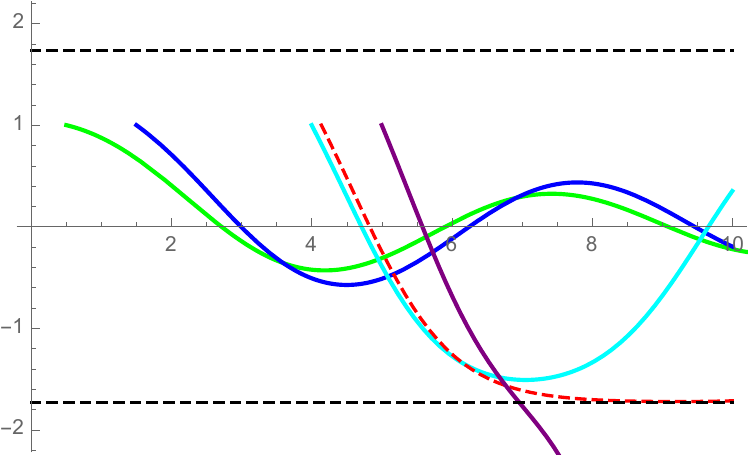}
    \caption{ The solutions $\psi\left(r;r_0,\beta,-\frac{g(\beta)}{k}r_0\right)$ with $k=2$, $g(u):=u-\frac 1 3 u^3$ (so that $\beta=1$ and $\alpha=\sqrt 3$). Solutions of the type $A$: in green $r_0=0.5$, in blue $r_0=1.5$, in cyan $r_0=4$. Solution of the type $B$: in dashed red $r_0\approx 4.139$.  Solutions of the type $C$: in purple $r_0=5$.}
   \label{fig:A-B-C}
   \end{center}
\end{figure}

Here are a few  observations. First, from Lemma \ref{lem:monitoring}, we have $\A\psi(r_0)=(\A\psi)'(r_0)=0$ while $(\A\psi)''(r_0)>0$, so that 
\begin{equation}\label{bidule}
\A\psi > 0 \text{ on } (r_0,r_0+\ep),
\end{equation}
 for some $\ep>0$ (and this is true for any $k\geq 1$). Next, the difference of energy $E(r):=\frac{1}{2}\psi'^2(r)+G(\psi(r))$ between points $t$ and $r>t$ is given by
\begin{equation}
\label{diff-energy}
E(r)-E(t)=-(k-1)\int_{t}^r \frac{\psi'(s)}{s}\psi'(s)ds.
\end{equation}
Notice also that $\frac{d}{dr} \left(\frac{\psi'r)}{r}\right)=\frac 1 r \A \psi (r)$. 
It follows that, if we know that $\A\psi\geq 0$ and $\psi'< 0$ on $(r_0,r)$, then we have the lower bound
\begin{equation}
\label{diff-energy-bis}
E(r)-E(r_0)\geq  -(k-1)\int_{r_0}^r \frac{\psi'(r_0)}{r_0}\psi'(s)ds=\frac{k-1}{k}g(\beta)(\psi(r)-\beta).
\end{equation}
Building on this, we can pursue and conclude.

	$\bullet$ First, if $r_0$ is small then the initial energy is small and the damping is large, so that solutions are expected to be trapped and localized. Precisely, let us prove 
\begin{equation}\label{claim:localized} \text{ there is }\;  \underline{r_0}\geq r_0^{loc}:=\frac{k\sqrt{2}}{g(\beta)}\left( G(\alpha)-\frac{1}k G(\beta) \right) ^{\frac{1}{2}}>0\; \text{ such that }\;   (0,\underline{r_0})\subset A.
\end{equation}
From \eqref{bidule} and Corollary \ref{cor:switch-psi}, we know that $\A\psi \geq 0$ on $(r_0,r_1)$ where $r_1>r_0$ is the first point where $\psi$ values 0. From the equation for $\psi$, we thus have  $\frac{\psi'(r)}{r}\leq -\frac 1 k g(\psi(r))$ on $(r_0,r_1)$, so that \eqref{diff-energy} yields
\begin{equation}
\label{diff-energy-bis-bis}
E(r_1)-E(r_0)\leq  \frac{k-1}k\int_{r_0}^{r_1} g(\psi(s))\psi'(s)ds=\frac{k-1}{k}(G(\psi(r_1))-G(\beta))=-\frac{k-1}{k}G(\beta).
\end{equation}
Now, if $r_0\in C$, then $\psi'<0$ on $(r_1,r_2)$ where $r_2>r_1$ is the point where $\psi(r_2)=-\alpha$, and \eqref{diff-energy} enforces $E(r_1)\geq E(r_2)\geq G(-\alpha)=G(\alpha)$. Hence, from \eqref{diff-energy-bis-bis}, $G(\alpha)\leq \frac{1}{2}\frac{g^2(\beta)}{k^2}r_0^2+G(\beta)-\frac{k-1}{k}G(\beta)$, that is $r_0\geq r_0^{loc}$. If $r_0\in B$, we find the same estimate $r_0\geq r_0^{loc}$ (roughly speaking $r_2=+\infty$). This proves that $(0,r_0^{loc})\subset A$ and thus \eqref{claim:localized}.

$\bullet$ Second, if $r_0$ is large then not only the damping is small but also the initial speed is very negative, so that solutions are expected to be non trapped. Precisely, let us prove that
\begin{equation}\label{claim:non-trapped} \text{ there is }\;  \overline{r_0}\leq \frac{k\sqrt{2}}{g(\beta)}\left( G(\alpha)-G(\beta)+\frac{k-1}k g(\beta)(\alpha+\beta) \right) ^{\frac{1}{2}} \; \text{ such that }\;   (\overline{r_0},+\infty)\subset C.
\end{equation}
If $r_0\in A$, then there is $r_1>r_0$ such that $\psi '<0$ on $(r_0,r_1)$, $\psi'(r_1)=0$, and $\psi (r_1)\in (-\alpha,0)$; furthermore \eqref{bidule} and  Corollary \ref{cor:switch-psi} enforce $\A\psi \geq 0$ on $(r_0,r_1)$. We can thus apply \eqref{diff-energy-bis} and obtain
$$
G(\alpha)-E(r_0)\geq E(r_1)-E(r_0)\geq \frac{k-1}{k}g(\beta)(\psi(r_1)-\beta)\geq \frac{k-1}{k}g(\beta)(-\alpha-\beta),
$$
which means $r_0\leq r_0^{non-tra}:=\frac{k\sqrt{2}}{g(\beta)}\left( G(\alpha)-G(\beta)+\frac{k-1}k g(\beta)(\alpha+\beta) \right) ^{\frac{1}{2}} $. If $r_0\in B$, we find the same estimate $r_0\leq r_0^{non-tra}$ (roughly speaking $r_1=+\infty$). This proves that $(r_0^{non-tra},0)\subset C$ and thus \eqref{claim:non-trapped}. 

$\bullet$ Hence,  both $A$ and $C$ are open and non empty so that $(0,+\infty)=A\cup B\cup C$ enforces $B$ to be non empty, which completes the proof.
\end{proof}

\section{Radial solutions}\label{s:radial}

In this section, we will prove Theorem \ref{th:radial-k-1} ($k=1$) and Theorem \ref{th:radial-k-2} ($k\geq 2$). Recalling Definition \ref{def:radial}, we thus look after a $0<R\leq +\infty$ and a piecewise $\mathcal C^2$ function $u:[0,R)\to \R$, with $u'(0)=0$,  starting from $u(0)=\xi\in \R$, and such that $U(x):=u(\vert x\vert)$ solves \eqref{eq-plus} on $B(0,R)$.  From subsection \ref{ss:radial}, we understood that, depending on the sign of
$$
\A u(r):=u''(r)-\frac 1 ru'(r),
$$
we are facing either a first order ODE studied in subsection \ref{ss:ode-ordre-1} or a second order ODE studied in subsection \ref{ss:ode-ordre-2}. We call {\it switching points}  the points where $\A u(r)$  changes sign, possibly with a jump. If such points exist, in order to construct radial solutions, we need to glue some solutions to \eqref{cauchy-ordre1} with some solutions to \eqref{cauchy-ordre2}. It is therefore crucial to determine when a switch occurs at $r_0>0$ together with the \lq\lq starting behaviour'' at $r_0=0$.

In what follows FOE stands for the First Order Equation (whose solutions are typically denoted $\varphi$) and SOE for the Second Order Equation (whose solutions are typically denoted $\psi$).

\begin{lem}[Switching points]\label{lem:switching} Let $(R,u)$ be a solution to \eqref{eq-plus} in the sense of Definition \ref{def:radial}. 
		\begin{enumerate}
		\item [(i)] It switches from FOE to SOE at $r_0\in(0,R)$ if and only if $\varphi(r_0)=\beta$. The solution is then of class $\mathcal{C}^2$ in a neighbourhood of $r_0$.
		\item [(ii)] It switches from SOE to FOE at $r_0\in(0,R)$ if and only if $\psi(r_0)\in  (-\infty,-\alpha)\cup(-\beta,0)\cup(\beta,\alpha)$ and $ \psi'(r_0)=-\frac{r_0}{k}g(\psi(r_0))$. The solution is then of class $\mathcal{C}^1$ in a neighbourhood of $r_0$, but $u ''(r_0^-)>u''(r_0^+)$. 
	\end{enumerate}
\end{lem}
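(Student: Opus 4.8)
The plan is to reduce both equivalences to the two switching corollaries already established, Corollary~\ref{cor:switch-phi} and Corollary~\ref{cor:switch-psi}, and then to settle the regularity by a direct gluing computation. On the FOE side the solution coincides with a solution $\varphi$ of \eqref{cauchy-ordre1}, so that $\A u=\A\varphi$ there; a switch from FOE to SOE at $r_0$ means precisely that $\A u$, hence $\A\varphi$, passes from nonpositive to positive, i.e. that $\A\varphi$ ceases to be nonpositive at $r_0$ in the sense defined before Corollary~\ref{cor:switch-phi}. That corollary gives the equivalence with $\varphi(r_0)=\beta$. Symmetrically, on the SOE side $u$ coincides with a solution $\psi$ of \eqref{cauchy-ordre2}, so $\A u=\A\psi$, and a switch from SOE to FOE means $\A\psi$ ceases to be nonnegative at $r_0$; Corollary~\ref{cor:switch-psi} yields the stated equivalence $\psi(r_0)\in(-\infty,-\alpha)\cup(-\beta,0)\cup(\beta,\alpha)$ together with $\psi'(r_0)=-\frac{r_0}{k}g(\psi(r_0))$.

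For the regularity in $(i)$ I would glue a solution $\psi$ of the SOE on the right of $r_0$, imposing the matching $\psi(r_0)=\varphi(r_0)=\beta$ and $\psi'(r_0)=\varphi'(r_0)=-\frac{r_0}{k}g(\beta)$, the value $\varphi'(r_0)$ being read off \eqref{cauchy-ordre1}; this already ensures $u\in\mathcal C^1$. To get $\mathcal C^2$ I compute the two one-sided second derivatives through $u''=\A u+\frac 1r u'$. On the left, \eqref{A-phi} gives $\A\varphi(r_0)=\frac{r_0^2}{k^2}g(\beta)g'(\beta)=0$ since $g'(\beta)=0$, whence $\varphi''(r_0^-)=\frac{1}{r_0}\varphi'(r_0)=-\frac 1k g(\beta)$. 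On the right, \eqref{A-psi} with the matched data gives $\A\psi(r_0)=-\frac{k}{r_0}\psi'(r_0)-g(\beta)=0$, whence $\psi''(r_0^+)=\frac 1{r_0}\psi'(r_0)=-\frac 1k g(\beta)$. The two values coincide, so $u$ is of class $\mathcal C^2$ near $r_0$. One may moreover check, using \eqref{A-psi-prime} and \eqref{A-psi-prime-prime} together with $g''(\beta)<0$, that $\A\psi>0$ just after $r_0$ (this is exactly observation \eqref{bidule}), confirming the continuation genuinely stays in the SOE regime.

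For $(ii)$ I would glue a solution $\varphi$ of the FOE on the right of $r_0$ with $\varphi(r_0)=\psi(r_0)=:c$. The switching condition $\psi'(r_0)=-\frac{r_0}{k}g(c)$ matches $\varphi'(r_0)$ read from \eqref{cauchy-ordre1}, so $u\in\mathcal C^1$. For the jump, using again $u''=\A u+\frac 1r u'$ and the equality of the first derivatives at $r_0$, I obtain $u''(r_0^-)-u''(r_0^+)=\A\psi(r_0)-\A\varphi(r_0)=-\frac{r_0^2}{k^2}g(c)g'(c)$, since $\A\psi(r_0)=0$ at the switch and $\A\varphi(r_0)=\frac{r_0^2}{k^2}g(c)g'(c)$ by \eqref{A-phi}. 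It then remains to show $g(c)g'(c)<0$ on the three intervals: here Assumption~\ref{ass:nonlinearity} and the oddness of $g$, which makes $g'$ even, enter. On $(\beta,\alpha)$ one has $g>0$, $g'<0$; on $(-\beta,0)$ one has $g<0$, $g'>0$; on $(-\infty,-\alpha)$ one has $g>0$, $g'<0$. In all cases $g(c)g'(c)<0$, giving $u''(r_0^-)>u''(r_0^+)$.

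The argument is essentially a bookkeeping of one-sided Taylor data, so I expect no serious obstacle beyond organizing the matching cleanly. The only genuinely substantive point is the sign computation for the $\mathcal C^2$ jump in $(ii)$, which crucially relies on the precise location of $c$ provided by Corollary~\ref{cor:switch-psi} and on the symmetry of $g$.
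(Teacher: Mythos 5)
Your proof is correct and follows essentially the same route as the paper's: both reduce the two characterizations to Corollary \ref{cor:switch-phi} and Corollary \ref{cor:switch-psi}, use the $\mathcal C^1$ matching to identify the glued continuation (together with \eqref{bidule}, resp. the sign of $\A\varphi(r_0)$, to confirm the switch really occurs), and obtain the regularity statements from the values of $\A u$ on either side of $r_0$ --- continuity of $\A u$ giving $\mathcal C^2$ in case $(i)$, and $\A\psi(r_0)=0>\A\varphi(r_0)=\frac{r_0^2}{k^2}g(c)g'(c)$ giving the jump $u''(r_0^-)>u''(r_0^+)$ in case $(ii)$. Your explicit computation of the one-sided second derivatives and of the sign of $gg'$ on the three intervals merely spells out what the paper leaves implicit.
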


\begin{proof} Let us prove $(i)$. From Corollary \ref{cor:switch-phi}, the condition $\varphi(r_0)=\beta$ is necessary for the switch to occur. Conversely, $\varphi(r_0)=\beta$ enforces $\varphi'(r_0)=-\frac{r_0}{k}g(\beta)$ and, $u$ being $\mathcal C^1$ has to be equal to $\psi=\psi\left(\cdot;r_0,\beta,-\frac{g(\beta)}{k}r_0\right)$, provided by Theorem \ref{th:ODE2-bis}, on  $(r_0,r_1)$ for some $r_1>0$. It follows from \eqref{bidule}   that the switch has occurred at $r_0$. Last, the continuity of $\A u$ implies that of $u''$ and the solution $u$ is $\mathcal{C}^2$ in a neighbourhood of $r_0$.

Let us prove $(ii)$. From Corollary \ref{cor:switch-psi}, the stated conditions are necessary. Conversely, $u$ being $\mathcal C^1$ has to be equal to  $\varphi=\varphi(\cdot;r_0,\psi(r_0))$, defined at the beginning of subsection \ref{ss:ode-ordre-1}, on $(r_0,r_1)$ for some $r_1>0$. It follows from \eqref{A-phi} that $\A\varphi(r_0)=\frac{r_0^2}{k^2}g(\varphi(r_0))g'(\varphi(r_0))<0$, and thus the switch has occurred at $r_0$. Last, $\A\psi(r_0)=0>\A \varphi (r_0)$ transfers into  $u''(r_0^-)>u''(r_0^+)$.
\end{proof}

\begin{lem}[When starting]\label{lem:starting} Let $(R,u)$ be a solution to \eqref{eq-plus} in the sense of Definition \ref{def:radial}.  Denote $u(0)=\xi\in\R$.
	\begin{enumerate}
	\item [(i)] If $\xi\in \{-\alpha,0,\alpha\}$, then $R=+\infty$ and $u$ is constant. 
		\item [(ii)] If $\xi\in (-\infty,-\alpha)\cup[-\beta,0)\cup(\beta,\alpha)$, then there is $\ep>0$ such that $\A u< 0$ on $(0,\ep)$, so that $u$ follows the FOE on $[0,\ep)$. 
		\item [(iii)] if $\xi\in (-\alpha,-\beta)\cup(0,\beta]\cup(\alpha,+\infty)$, then there is $\ep>0$ such that $\A u> 0$ on $(0,\ep)$, so that $u$ follows the SOE on $[0,\ep)$. 
	\end{enumerate}
\end{lem}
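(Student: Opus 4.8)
The plan is to reduce everything to determining the sign of $\A u(r)$ for small $r>0$, since by \eqref{calcul-Pplus} this sign dictates whether $u$ obeys the FOE or the SOE. Case $(i)$ is immediate: if $\xi\in\{-\alpha,0,\alpha\}$ then $g(\xi)=0$, so the constant function $u\equiv\xi$ solves \eqref{eq-plus} (indeed $\A u\equiv 0$ and $\P^+_k U=0=-g(\xi)$), and by the uniqueness provided by Lemma \ref{lem:ODE1}$(i)$ and Lemma \ref{lem:ODE2}$(i)$ it is the only solution, whence $R=+\infty$. For $(ii)$--$(iii)$, I observe that on a first initial interval $u$ must coincide with one of the two canonical Cauchy solutions: either the FOE solution $\varphi(\cdot;0,\xi)$ of subsection \ref{ss:ode-ordre-1}, or the SOE solution $\psi(\cdot;0,\xi,0)$ of Lemma \ref{lem:ODE2} (the initial slope being forced to vanish by $u'(0)=0$, see Remark \ref{rem:Sturm}). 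The regime is then pinned down by self-consistency: the FOE is admissible only where $\A u\le 0$ and the SOE only where $\A u\ge 0$, so it suffices to compute the sign of $\A\varphi$ and of $\A\psi$ near $0$ and to keep the consistent one.

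The core computation is a short expansion as $r\to 0^+$. For the FOE, \eqref{A-phi} gives directly $\A\varphi(r)=\frac{r^2}{k^2}g(\varphi(r))g'(\varphi(r))=\frac{g(\xi)g'(\xi)}{k^2}r^2+o(r^2)$. For the SOE, I use the integral representation \eqref{A-psi-integrated} with $r_0=0$ (the boundary term drops since $k\ge 1$), together with $\psi'(s)\sim-\frac{g(\xi)}{k}s$ and $g'(\psi(s))\to g'(\xi)$, to obtain $\A\psi(r)=\frac{g(\xi)g'(\xi)}{k(k+2)}r^2+o(r^2)$. Thus, whenever $g(\xi)g'(\xi)\neq 0$, both $\A\varphi$ and $\A\psi$ carry, near $0$, the sign of $g(\xi)g'(\xi)$, so exactly one regime is self-consistent. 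It remains to read off this sign from Assumption \ref{ass:nonlinearity}: combining the sign of $g$ (positive on $(0,\alpha)$ and $(-\infty,-\alpha)$, negative on $(\alpha,+\infty)$ and $(-\alpha,0)$) with that of $g'$ (positive on $(-\beta,\beta)$, negative on $|t|>\beta$, by evenness of $g'$), one finds $g(\xi)g'(\xi)<0$ precisely on $(-\infty,-\alpha)\cup(-\beta,0)\cup(\beta,\alpha)$, giving $(ii)$ with $\A u<0$, and $g(\xi)g'(\xi)>0$ precisely on $(-\alpha,-\beta)\cup(0,\beta)\cup(\alpha,+\infty)$, giving $(iii)$ with $\A u>0$.

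The delicate points, and the main obstacle, are the two boundary values $\xi=\pm\beta$, where $g'(\xi)=0$ makes the leading $r^2$ term vanish and leaves the sign undecided. Here I would push the expansions to order $r^4$, using $\psi(s)-\beta\sim-\frac{g(\beta)}{2k}s^2$ and $g'(\psi(s))\sim g''(\beta)(\psi(s)-\beta)$, to obtain from \eqref{A-psi-integrated} that $\A\psi(r)\sim-\frac{g''(\beta)g^2(\beta)}{2k^2(k+4)}r^4$ at $\xi=\beta$, which is positive since $g''(\beta)<0$ (Assumption \ref{ass:nonlinearity}$(iii)$); hence the SOE is consistent and $\beta$ belongs to $(iii)$. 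Symmetrically, expanding \eqref{A-phi} at $\xi=-\beta$ (using that $g''$ is odd, so $g''(-\beta)>0$) gives $\A\varphi(r)\sim\frac{g''(\beta)g^2(\beta)}{2k^3}r^4<0$, so the FOE is consistent and $-\beta$ belongs to $(ii)$. This is essentially the same second-order mechanism already exploited in Corollary \ref{cor:switch-phi} and Corollary \ref{cor:switch-psi}, here run at the origin rather than at an interior switching point; the only real care needed is the bookkeeping of signs coming from the oddness of $g$ and from $g''(\beta)<0$.
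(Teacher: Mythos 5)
Your sign computations near the origin are correct as far as they go (the expansions $\A\varphi(r)\approx\frac{g(\xi)g'(\xi)}{k^2}r^2$, $\A\psi(r)\approx\frac{g(\xi)g'(\xi)}{k(k+2)}r^2$, and the fourth-order expansions at $\pm\beta$ exploiting $g''(\beta)<0$, all check out), but the proof has a genuine gap at its very first step: you assert, without justification, that ``on a first initial interval $u$ must coincide with one of the two canonical Cauchy solutions'' $\varphi(\cdot;0,\xi)$ or $\psi(\cdot;0,\xi,0)$. This is precisely what has to be proved, and it is the subtle point of the lemma. By Definition \ref{def:radial} and \eqref{calcul-Pplus}, a solution is only required to satisfy, at each individual $r$, the FOE where $\A u(r)\leq 0$ and the SOE where $\A u(r)\geq 0$; nothing a priori prevents $\A u$ from changing sign infinitely many times as $r\to 0^+$, i.e.\ switching points accumulating at the origin. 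In that scenario $u$ agrees with neither $\varphi$ nor $\psi$ on any interval $(0,\ep)$ --- on each FOE (resp.\ SOE) component $u$ solves the corresponding equation with data prescribed at an endpoint of that component, not at $r=0$ --- so your ``self-consistency'' computation, which concerns only the two canonical solutions, says nothing about $u$. The paper removes this obstruction before any sign analysis: under the FOE $u'=-\frac{r}{k}g(u)$, while under the SOE $\A u\geq 0$ forces $u'\leq-\frac{r}{k}g(u)$; hence every solution satisfies $u'\leq-\frac{r}{k}g(u)$ pointwise, so the level $u=\beta$ is a wall that can only be crossed downwards. Since a switch from FOE to SOE can occur only where $u=\beta$ (Corollary \ref{cor:switch-phi}, Lemma \ref{lem:switching} $(i)$), there is at most one such switch, hence at most three switches in total, and therefore $\A u$ has a constant sign on some $(0,\ep)$ --- this is \eqref{starting-signe}. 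Only after this step does the dichotomy ``$u\equiv\varphi$ or $u\equiv\psi$ near $0$'' make sense and can your expansions (or the paper's more qualitative region arguments based on \eqref{A-phi} and \eqref{A-psi-integrated}) take over.

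A second, more minor, point: at the boundary values you only verify half of what is needed. At $\xi=\beta$ you check that the SOE is self-consistent ($\A\psi>0$ near $0$), but since the lemma asserts $\A u>0$ for \emph{every} solution, you must also exclude the FOE start, i.e.\ show $\A\varphi$ is not $\leq 0$ near $0$ (the parallel expansion gives $\A\varphi(r)\sim-\frac{g''(\beta)g^2(\beta)}{2k^3}r^4>0$, so this does work); symmetrically, at $\xi=-\beta$ you must also exclude the SOE start. The paper performs both exclusions (the case $\xi=\beta$ qualitatively, by noting $\varphi$ immediately enters a region where the FOE is inadmissible; the case $\xi=-\beta$ via the sign of $(\A\psi)'$ from \eqref{A-psi-prime}). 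These are routine computations once the constant-sign property is established, but as written your argument proves existence of a consistent regime at $\pm\beta$ rather than its uniqueness.
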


\begin{proof} A key observation is that if $u$ follows the FOE then $u'=-\frac{r}{k}g(u)$, while if $u$ follows the SOE then $\A u\geq 0$ so that $u'\leq -\frac{r}{k}g(u)$. In particular if $r$ is such that $u(r)=\beta$ then $u'(r)\leq -\frac{r}{k}g(\beta)<0$, i.e.  $u=\beta$ is a wall that can be crossed in the decreasing direction only. As a result it follows from Lemma \ref{lem:switching} $(i)$ that there can be at most one switch from FOE to SOE, and the total number of switches is at most three. In particular 
\begin{equation}
\label{starting-signe}
\text{ there is $\ep>0$ such that $\A u$ has a constant sign on $(0,\ep)$.}
\end{equation}
 Conclusion $(i)$ is then obvious and we now assume $\xi\not\in\{-\alpha,0,\alpha\}$. 

Assume $\A u\leq 0$ on $[0,\ep)$ so that $u=\varphi$ follows the FOE. According to \eqref{A-phi}, $\varphi(r)$ lies in the region  $\{gg'\leq 0\}$, hence $\xi\in (-\infty,-\alpha)\cup[-\beta,0)\cup[\beta,\alpha)$. But if $\xi=\beta$ then, from the equation for $\varphi$, $\varphi'(0)=0$, $\varphi''(0)=-\frac 1 k g(\beta)<0$ so that $u=\varphi$  enters the interval $(0,\beta)$ where it cannot follow the FOE. Hence $\xi\in (-\infty,-\alpha)\cup[-\beta,0)\cup(\beta,\alpha)$, which proves $(iii)$, up to the strict inequality for $\A u$ that is easily checked via \eqref{A-phi}, \eqref{A-phi-prime}.

Assume $\A u\geq 0$ on $[0,\ep)$ so that $u=\psi$ follows the SOE. From \eqref{A-psi-integrated}, we get $\ds\A\psi(r)=-\int_0^r\frac{s^k}{r^k}\psi'(s)g'(\psi(s))ds$ on $[0,\ep)$. Taking into account $\psi''(0)=-\frac{1}{k}g(\xi)$, we infer that, up to reducing $\ep>0$, $-\psi'$ has the same sign as $g(\psi)$ on $(0,\ep)$. Hence, the sign of $-\psi'(s)g'(\psi(s))$ is the one of $g(\psi(s))g'(\psi(s))$   which implies that $\psi(s)$ lives in the region $\{gg'\geq0\}$, hence $\xi\in (-\alpha,-\beta]\cup(0,\beta]\cup(\alpha,+\infty)$. It remains to exclude the case $\xi=-\beta$: in this case $\psi''(0)>0$ so that, up to reducing $\ep>0$, $\psi'(r)>0$ on this interval; but $\A\psi(r)\geq 0$ implies $(\A\psi)'(r)=-\frac{k}{r}\A\psi(r)-\psi'(r)g'(\psi(r))<0$, a contradiction. Hence $\xi\in(-\alpha,-\beta)\cup(0,\beta]\cup(\alpha,+\infty)$, which proves $(ii)$, up to the strict inequality for $\A u$ that is easily checked via \eqref{A-psi}, \eqref{A-psi-prime}, \eqref{A-psi-prime-prime}.
\end{proof}

\subsection{Starting with $\A u> 0$}\label{ss:starting-plus}

In this subsection we assume $u(0)=\xi$ with $\xi\in (-\alpha,-\beta)\cup(0,\beta]\cup(\alpha,+\infty)$. We know from Lemma \ref{lem:starting} that 
\begin{equation}
\label{starting-plus}
\exists \ep >0, \forall r\in (0,\ep), \mathcal A u(r)> 0,
\end{equation}
 and that the solution $u$ follows the SOE on $[0,\ep)$. We thus denote $\psi_0(r):=\psi(r;0,\xi,0)$ the solution to \eqref{cauchy-ordre2}, that is
\begin{equation}\label{cauchy-ordre2-k-egal-1-bis-bis}
\left\{\begin{array}{lllll} \psi_0''+\frac{k-1}{r}\psi_0'(r)+g(\psi_0)&=&0,\\
\psi_0(0) &=& \xi,\\
\psi'_0(0)&=&0,
\end{array}\right.
\end{equation}
 defined on some open interval $J_0$ containing $0$, and we have to start, at least on $(0,\ep_0:=\min(\ep, \sup J_0))$, with $u\equiv \psi_0$.  We know that there is $\eta>0$ such that $\A \psi_0>0$ in $(0,\eta)$, and we can define
 \begin{equation}
 	\label{def:t0}
 	t_0:=\sup\{t>0: \A \psi_0> 0 \text{ on } (0,t)\} \in (0,\sup J_0].
 \end{equation}
 
Now we distinguish some cases depending on the initial value $\xi$.

$\bullet$ If $\xi>\alpha$ we know 
from Lemma \ref{lem:ODE2} $(iii)$ that $\psi_0'(t_0)>0$ and $\psi_0(t_0)>\alpha$ so that, according to Lemma \ref{lem:switching}, no switch can occur. Hence $t_0=\sup J_0$ and, in view of Lemma \ref{lem:ODE2} $(iii)$,  $u\equiv \psi_0$  provides a positive, increasing and unbounded solution on $[0,\sup J_0)$.

$\bullet$ If $-\alpha<\xi< -\beta$, it follows from Lemma \ref{lem:ODE2} $(ii)$ that $\psi_0$ is oscillating (periodic when $k=1$).
From  $\psi_0''(0)=-\frac{g(\xi)}k>0$ and thus, from Lemma \ref{lem:ODE2} $(ii)$, there is $d>0$ such that $\psi_0'>0$ on $(0,d]$ and $\psi_0(d)=0$. Therefore, from the equation, $\psi_0''(d)\leq 0$ and then $\A \psi_0(d)=\psi_0''(d)-\frac{1}{d}\psi_0'(d)<0$. As a result $t_0<d$ and from Lemma \ref{lem:switching} we obtain $\psi_0(t_0)\in (-\beta,0)$. 

Now, after the switching point $t_0$, we consider $\varphi_0(r):=\varphi(r;t_0,\psi_0(t_0))$ the solution to \eqref{cauchy-ordre1}. From Lemma \ref{lem:ODE1} and \eqref{A-phi}, we know that $\A \varphi _0<0$ on $(t_0,+\infty)$, and there is no more switching point.

Gluing $\psi _0$ and $\varphi_0$ provides a negative, increasing and localized solution $u$ on $[0,+\infty)$.

$\bullet$ If $0<\xi \leq  \beta$,  we know $\psi_0''(0)=-\frac{g(\xi)}k<0$ and thus, from Lemma \ref{lem:ODE2}, there are  $0<b<d$ such that $\psi_0'<0$ on $(0,b)$, $\psi_0(b)\in [-\xi,0)$, $\psi_0'>0$ on $(b,d]$, $\psi_0(d)=0$. From the equation, $\psi_0''(d)<0$ and then $\A \psi_0(d)=\psi_0''(d)-\frac{1}{d}\psi_0'(d)<0$, so that $t_0<d$ and even  $b<t_0<d$ as one can check via Corollary \ref{cor:switch-psi} and the equation for $\psi_0$. In particular $\psi_0(t_0)\in (-\xi,0)\subset (-\beta,0)$.

Now, after the switching point $t_0$, we consider $\varphi_0(r):=\varphi(r;t_0,\psi_0(t_0))$ the solution to \eqref{cauchy-ordre1}. From Lemma \ref{lem:ODE1} and \eqref{A-phi}, we know that $\A \varphi _0<0$ on $(t_0,+\infty)$, and there is no more switching point.

As  a result, gluing $\psi _0$ and $\varphi_0$ provides a  sign-changing, decreasing-increasing and localized solution $u$ on $[0,+\infty)$.

\subsection{Starting with $\A u< 0$}\label{ss:starting-moins}

In this subsection we assume $u(0)=\xi$ with  $\xi\in (-\infty,-\alpha)\cup[-\beta,0)\cup(\beta,\alpha)$. We know from Lemma \ref{lem:starting} that 
\begin{equation}
\label{starting-moins}
\exists \ep >0, \forall r\in (0,\ep), \mathcal A u(r)< 0,
\end{equation}
 and that the solution $u$ follows the FOE on $[0,\ep)$. We thus denote  $\varphi_0(r):=\varphi(r;0,\xi)$ the solution to \eqref{cauchy-ordre1}, defined on some open interval $I_0$ containing $0$, and we have to start, at least on $(0,\ep_0:=\min(\ep, \sup I_0))$, with $u\equiv \varphi_0$.
 
 Now we distinguish some cases depending on the initial value $\xi$.

$\bullet$ If $\xi <-\alpha$ then \eqref{A-phi}, combined with Lemma \ref{lem:ODE1} and Assumption \ref{ass:nonlinearity}, shows that $\A \varphi_0 (r)< 0$ for all $r\in (0,\sup I_0)$. As a result, there is no switching point and $u\equiv \varphi_0$ as long as it exists. This provides a negative, decreasing and unbounded solution $u$ on $[0,\sup I_0)$. 

$\bullet$ If $-\beta\leq \xi<0$ then \eqref{A-phi}, combined with Lemma \ref{lem:ODE1} and Assumption \ref{ass:nonlinearity}, shows that $\A \varphi_0 (r)< 0$ for all $r\in (0,+\infty)$. As a result, there is no switching point and $u\equiv \varphi_0$ on $[0,+\infty)$. This provides a negative, increasing and localized solution $u$ on $[0,+\infty)$.  Observe  that such solutions \lq\lq without any switch'' were already described in \cite{Bir-Gal-20}.

$\bullet$ We now focus on the case $\beta <\xi <\alpha$, which is the richest. From item $(ii)$ of Lemma \ref{lem:ODE1}, we know that $\varphi_0'<0$  in $(0,+\infty)$. In particular, from \eqref{A-phi} and Assumption \ref{ass:nonlinearity}, we have $\A \varphi_0 <0$ as long as $\varphi_0$ has not reached $\beta$, which happens at some $r_0>0$. From the expression \eqref{varphi} of the solution $\varphi_0$, we know that $r_0>0$ is given by 
\begin{equation}\label{r-zero}
r_0^{2}=2 k\int _\beta ^\xi \frac{ds}{g(s)}.
\end{equation}
We thus have to select
\begin{equation}\label{debut}
	u\equiv \varphi _0 \quad \text{ on }\; [0,r_0],
\end{equation}
and from Lemma \ref{lem:switching} we know that $u$ switches from FOE to SOE at $r_0$.

From now we distinguish the situation $k=1$ (for which we build on Lemma \ref{lem:ODE2-k-egal-1}) from the situation $k\geq 2$ (for which we build on Theorem \ref{th:ODE2-bis}).

\subsubsection{Switch at $r=r_0$ when $k=1$} 

We now consider $\psi_0(r):=\psi(r;r_0,\beta,-r_0g(\beta))$ the solution to \eqref{cauchy-ordre2} with $k=1$, that is
\begin{equation}\label{cauchy-ordre2-k-egal-1-bis}
\left\{\begin{array}{lllll} \psi_0''+g(\psi_0)&=&0,\\
\psi_0(r_0) &=& \beta,\\
\psi'_0(r_0)&=&-r_0g(\beta),
\end{array}\right.
\end{equation}
defined on some open interval $J_0$ containing $r_0$. From Lemma \ref{lem:ODE2-k-egal-1}, the associated energy given by
\begin{equation}
E_0=\frac 12 (r_0g(\beta))^{2}+G(\beta)=g^2(\beta)\int _\beta ^\xi \frac{ds}{g(s)}+\int_0^\beta g(s)ds
\end{equation}
determines the outcome of $\psi _0$. From \eqref{bidule}, there is $\eta >0$ such that $\A \psi_0>0$ in $(r_0,r_0+\eta)$, and we can define
\begin{equation}
\label{def:t1}
t_1:=\sup\{t>r_0: \A \psi_0>0 \text{ on } (r_0,t)\} \in (r_0,\sup J_0].
\end{equation}
Recalling the definition of $\xi ^{*}$ in \eqref{def:xi-etoile}, we now distinguish three regimes for the initial data $\xi$.

$\blacktriangleright$ Assume $\xi ^*<\xi<\alpha$.  Then $E_0>G(\alpha)$ and it follows from Lemma \ref{lem:ODE2-k-egal-1} and the associated phase plane analysis that we have $\psi_0 '<0$, and thus $\psi_0<\beta$, on $(r_0,\sup J_0)$.

If $t_1=\sup J_0$, there is no more switching and we have to select
\begin{equation}\label{suite}
u\equiv \psi_0 \quad \text{ on }\; [r_0,\sup J_0).
\end{equation}
Gluing \eqref{debut} and \eqref{suite} provides a sign-changing, decreasing and unbounded solution $u$ on $[0,\sup J_0)$. 

On the other hand,  if $t_1<\sup J_0$ then $t_1$ is a switching point. In virtue of Lemma \ref{lem:switching} $(ii)$ we have   $\psi_0(t_1)<-\alpha$. After the new switching $t_1$,  we thus consider $\varphi_1(r):=\varphi(r;t_1,\psi_0(t_1))$ the solution to \eqref{cauchy-ordre1} on some open interval $I_1$, and we are back to a situation studied above. Gluing $\varphi_0$, $\psi_0$ and $\varphi_1$ provides again a sign-changing, decreasing and unbounded solution $u$ on $[0,\sup I_1)$.

$\blacktriangleright$ Assume $\xi=\xi^{*}$. Then $E_0=G(\alpha)$ and it follows from Lemma \ref{lem:ODE2-k-egal-1} that we have $\psi_0 '<0$, and thus $\psi_0<\beta$ on $(r_0,+\infty)$, and $\psi_0(+\infty)=-\alpha$. From Lemma \ref{lem:switching} $(ii)$ there is no more switching and we have to select 
\begin{equation}
u\equiv \psi _0 \quad \text{ on }\; [r_0,+\infty).
\end{equation}
This provides a sign-changing, decreasing and bounded solution with $u(+\infty)=-\alpha$.

$\blacktriangleright$ Assume $\beta<\xi<\xi^{*}$. Then $E_0<G(\alpha)$ and it follows from Lemma \ref{lem:ODE2-k-egal-1} that $\psi_0 $ is periodic. More precisely from  Lemma \ref{lem:ODE2-k-egal-1} there are $r_0<a<b<c<d$ such that $\psi_0'<0$ on $(r_0,b)$, $\psi_0(a)=-\beta$, $\psi_0(b)=-M_0\in(-\alpha,-\beta)$, $\psi_0'>0$ on $(b,d]$, $\psi_0(c)=-\beta$, $\psi_0(d)=0$ (see Figure \ref{fig:cartoon}).  In particular $\A\psi_0(d)=-\frac 1 d \psi_0 '(d)<0$ so that $t_1<d$. By definition of $t_1$ we have $\A\psi (t_1)=0$, $(\A\psi)'(t_1)\leq 0$. Using again \eqref{A-psi}, \eqref{A-psi-prime}, and \eqref{A-psi-prime-prime}, we see that $(\A\psi)'(t_1)=0$ would imply $(\A \psi)''(t_1)>0$ which is a contradiction. Hence $(\A\psi)'(t_1)< 0$ and $t_1$ is a switching point so that, according to Lemma \ref{lem:switching} $(ii)$, $\psi_0(t_1)\in(-\beta,0)$ with $\psi_0'(t_1)>0$ that is $c<t_1<d$. 
 We thus select
\begin{equation}\label{switch1}
u\equiv \psi _0 \quad \text{ on }\; [r_0,t_1].
\end{equation}

\begin{figure}
\begin{center}
  \includegraphics[width=10cm]{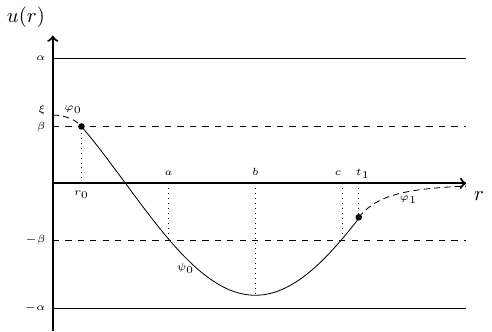}
    \caption{A cartoon of the solution starting from $\beta <\xi<\xi^*$ when $k=1$, and possibly from $\beta<\xi<\underline \xi$ when $k\geq 2$ (in this case $\psi_0(b)\in(-\beta,0)$ is not excluded). The solution starts as $\varphi _0$, switches at $r=r_0$ to $\psi _0$, and switches at $r=t_1$ to $\varphi _1$.}
   \label{fig:cartoon}
   \end{center}
\end{figure}

\noindent {\bf Second switch at $r=t_1$.} We now consider $\varphi_1(r):=\varphi(r;t_1,\psi_0(t_1))$ the solution to \eqref{cauchy-ordre1}, defined on some open interval $I_1$ containing $t_1$. From $-\beta<\psi_0(t_1)<0$ and Lemma \ref{lem:ODE1}, we know that $\sup I_1=+\infty$ and that $\A \varphi _1<0$ on $(r_1,+\infty)$. There is no more switching point and we have to select
\begin{equation}\label{switch2}
u\equiv \varphi _1 \quad \text{ on }\; [t_1,+\infty).
\end{equation}
As a conclusion,  when $\beta <\xi<\xi ^*$, gluing \eqref{debut}, \eqref{switch1} and \eqref{switch2} provides a sign-changing, decreasing-increasing and localized solution, as in Figure \ref{fig:cartoon}.

\medskip

Collecting the above results of this section, we have proved Theorem \ref{th:radial-k-1} on radial solutions when $k=1$. 
\qed

\subsubsection{Switch at $r=r_0$ when $k\geq 2$} \label{ss:switch}

We now consider $\psi_0(r):=\psi(r;r_0,\beta,-\frac{r_0}k g(\beta))$ the solution to \eqref{cauchy-ordre2}, that is
\begin{equation}\label{cauchy-ordre2-encore}
\left\{\begin{array}{lllll} \psi_0''+\frac{k-1}{r}\psi_0'+g(\psi_0)&=&0,\\
\psi_0(r_0) &=& \beta,\\
\psi'_0(r_0)&=&-\frac{r_0}k g(\beta),
\end{array}\right.
\end{equation}
defined on some open interval $J_0$ containing $r_0$.

Exactly as in the $k=1$ case,  we can define
\begin{equation}
\label{def:t1-bis}
t_1:=\sup\{t>r_0: \A \psi_0>0 \text{ on } (r_0,t)\} \in (r_0,\sup J_0].
\end{equation}
We now distinguish three regimes for the initial data $\xi$. Obviously there is a one-to-one relation between $r_0\in(0,+\infty)$ and $\xi\in(\beta,\alpha)$ through \eqref{r-zero}. Hence, $\overline{r_0}\geq r_0^{**}\geq\underline{r_0}$, from \eqref{tous-les-r} in Theorem \ref{th:ODE2-bis}, provide some $\overline{\xi}\geq \xi^{**}\geq \underline{\xi}$ in $(\beta,\alpha)$, which are those appearing in Theorem \ref{th:radial-k-2} item $(v)$. The arguments are then very comparable to the $k=1$ case and they are only sketched.

$\blacktriangleright$ Assume $\overline \xi <\xi<\alpha$, corresponding to  $r_0>\overline{r_0}$ through \eqref{r-zero}.  Then it follows from Theorem \ref{th:ODE2-bis} $(iii)$  that we have $\psi_0 '<0$, and thus $\psi_0<\beta$, on $(r_0,\sup J_0)$.

If $t_1=\sup J_0$, there is no more switching and we have to select
\begin{equation}\label{suite-bis}
u\equiv \psi_0 \quad \text{ on }\; [r_0,\sup J_0).
\end{equation}
Gluing \eqref{debut} and \eqref{suite-bis} provides a sign-changing, decreasing and unbounded solution $u$ on $[0,\sup J_0)$. 

On the other hand,  if $t_1<\sup J_0$ then it is a switching point and $\psi_0(t_1)<-\alpha$. After the new switching $t_1$,  we thus consider $\varphi_1(r):=\varphi(r;t_1,\psi_0(t_1))$ the solution to \eqref{cauchy-ordre1} on some open interval $I_1$, and we are back to a situation studied above. Gluing $\varphi_0$, $\psi_0$ and $\varphi_1$ provides again a sign-changing, decreasing and unbounded solution $u$ on $[0,\sup I_1)$.

$\blacktriangleright$ Assume $\xi=\xi^{**}$ corresponding to $r_0=r_0^{**}$ through \eqref{r-zero}. Then  it follows from Theorem \ref{th:ODE2-bis} $(ii)$
that we have $\psi_0 '<0$, and thus $\psi_0<\beta$, on $(r_0,+\infty)$ and $\psi_0(+\infty)=-\alpha$. From Lemma \ref{lem:switching} $(ii)$, there is no more switching, $t_1=+\infty$, and we have to select 
\begin{equation}
u\equiv \psi _0 \quad \text{ on }\; [r_0,+\infty).
\end{equation}
This provides a sign-changing, decreasing and bounded solution with $u(+\infty)=-\alpha$.  

$\blacktriangleright$ Assume $\beta<\xi<\underline \xi$ corresponding to $0<r_0<\underline{r_0}$ through \eqref{r-zero}. Then  it follows from Theorem \ref{th:ODE2-bis} $(i)$ that $\psi_0 $ is trapped, global, oscillating and localized. In particular,  there are $r_0<b<d$ such that $\psi_0'<0$ on $(r_0,b)$, $\psi_0(b)=-M_0\in(-\alpha,0)$, $\psi_0'>0$ on $(b,d]$, $\psi_0(d)=0$. % (see Figure \ref{fig:cartoon}).  
In particular $\A\psi_0(d)=-\frac k d \psi_0 '(d)<0$ so that $t_1<d$. By definition of $t_1$ we have $\A\psi (t_1)=0$, $(\A\psi)'(t_1)\leq 0$. Using again \eqref{A-psi}, \eqref{A-psi-prime}, and \eqref{A-psi-prime-prime}, we see that $(\A\psi)'(t_1)=0$ would imply $(\A \psi)''(t_1)>0$ which is a contradiction. Hence $(\A\psi)'(t_1)< 0$ and $t_1$ is a switching point so that, according to Lemma \ref{lem:switching} $(ii)$, $\psi_0(t_1)\in(-\beta,0)$ with $\psi_0'(t_1)>0$. We thus select
\begin{equation}\label{switch1-bis}
u\equiv \psi _0 \quad \text{ on }\; [r_0,t_1].
\end{equation}

\noindent {\bf Second switch at $r=t_1$.} We now consider $\varphi_1(r):=\varphi(r;t_1,\psi_0(t_1))$ the solution to \eqref{cauchy-ordre1}, defined on some open interval $I_1$ containing $t_1$. From $-\beta<\psi_0(t_1)<0$ and Lemma \ref{lem:ODE1}, we know that $\sup I_1=+\infty$ and that $\A \varphi _1<0$ on $(r_1,+\infty)$. Hence there is no more switching point and we have to select
\begin{equation}\label{switch2-bis}
u\equiv \varphi _1 \quad \text{ on }\; [t_1,+\infty).
\end{equation}
As a conclusion,  when $\beta <\xi<\underline \xi $, gluing \eqref{debut}, \eqref{switch1-bis} and \eqref{switch2-bis} provides a sign-changing, decreasing-increasing and localized solution, possibly as in Figure \ref{fig:cartoon}.  

\medskip

Collecting the above results of this section, we have proved Theorem \ref{th:radial-k-2} on radial solutions when $k\geq 2$. \qed

\subsection{Further  comments}\label{ss:final}

{\bf The role of the shape of $g$.}  Observe that a radial solution always satisfies
$$
u'(r)\leq -\frac{r}{k}g(u(r)).
$$
Indeed, for the FOE this is obviously an equality while, for the SOE, the inequality comes from $\A u\geq 0$. This has two consequences for a nonconstant solution $u$. First,  $u'(r)<0$ for all $r>0$ such that $g(u(r))>0$;  second $u'(r)$ can vanish if $g(u(r))<0$ only and, if so, $u$ follows the SOE and $u''(r)>0$. On the other hand, as revealed above,  the possibility of switching, as well as the choice of the \lq\lq starting'' equation at $r=0$, depends mainly on the sign of the product $gg'$.

Building on this, one may handle more complicated shapes for the nonlinearity $g$. Nevertheless, for the sake of clarity, we restricted ourselves to Assumption \ref{ass:nonlinearity} which already includes the most classical bistable nonlinearities.

\medskip

\noindent {\bf Solutions starting at $r_0>0$.} Obviously, the machinery we have developed to construct radial solutions on  balls $B(0,R)$ can  be applied for domains of the form $\{x\in \R^N: 0<r_0<\vert x\vert < R\leq +\infty\}$.

One may then wonder if those solutions can be extended \lq\lq towards the left'' (at least slightly). Building on \eqref{A-phi}-\eqref{A-phi-prime}, \eqref{A-psi}-\eqref{A-psi-prime}-\eqref{A-psi-prime-prime}, and Lemma \ref{lem:switching}, one can convince oneself (details are omitted) that the solutions that cannot be extended towards the left are $(i)$  the ones starting with the SOE and from $u(r_0)=\xi$, $u'(r_0)= -\frac{r_0}{k}g(\xi)$, where $\xi\in (-\alpha,-\beta)\cup(0,\beta)\cup(\alpha,+\infty)$; $(ii)$  the ones starting with the FOE and from $u(r_0)=-\beta$.

\medskip

\noindent{\bf Overlapping  solutions.} There are switching points $(r_0,\xi)$ that are reached by two solutions, different for $r<r_0$ but equal for $r\geq r_0$. 

Indeed,  consider a decreasing solution of type $(v)$-$(a)$ in Theorems \ref{th:radial-k-1} and \ref{th:radial-k-2}, which 
switches from  SOE  to FOE at some $r_0$ with $u(r_0)<-\alpha$ (one can prove that this always occurs when $g(u)\to +\infty$ as $u\to -\infty$). This solution encounters a solution $\overline u$ that follows the FOE on $[0,r_0)$, of the type $(ii)$ in Theorems \ref{th:radial-k-1} and \ref{th:radial-k-2}, and coincides with $u$ for $r\geq r_0$.

This phenomenon also applies to solutions not defined at $r=0$.

For instance, consider a solution of  type  $(v)$ in Theorems \ref{th:radial-k-1} and \ref{th:radial-k-2}: in particular it switches from FOE to SOE at some $r_0>0$ with $u(r_0)=\beta$, $u'(r_0)=-\frac{r_0}{k}g(\beta)$, see the above constructions and Figure \ref{fig:cartoon}. This solution encounters a solution $\overline u$ that follows the SOE on the left of $r_0$ and coincides with $u$ for $r\geq r_0$. On the left of $r_0$, $\overline u(\equiv \psi)$ is defined either on $[s_0,r_0]$ for some $0< s_0<r_0$ and is not defined for $r<s_0$, or on $(r_0,s_0]$ for some $0\leq s_0<r_0$ and $u(r)\to+\infty$ as $r\to s_0^+$.

Similarly, consider a solution of  type  $(v)$-$(c)$ in Theorems \ref{th:radial-k-1} and \ref{th:radial-k-2}: in particular,  it switches from SOE to FOE at some $t_1>0$ with $u(t_1)\in(-\beta,0)$, $u'(t_1)=-\frac{t_1}{k}g(u(t_1))>0$, see the above constructions and Figure \ref{fig:cartoon}. This solution encounters a solution $\overline u$ that follows the FOE on the left of $t_1$ and coincides with $u$ for $r\geq t_1$. On the left of $t_1$, $\overline u(\equiv \varphi)$ is defined on  $[t_0,t_1]$  for some $0<t_0<t_1$, with $\varphi(t_0)=-\beta$, and is not defined for $r<t_0$.

\bigskip

\noindent{\bf Acknowledgement.} Matthieu Alfaro is supported by  the {\it région Normandie} project BIOMA-NORMAN 21E04343 and the ANR project DEEV ANR-20-CE40-0011-01. 
%The authors are grateful to the anonymous referee whose precise comments have improved the presentation of the results.

\bibliographystyle{siam}  

\bibliography{biblio}

\end{document}